\documentclass[11pt]{amsart}
\usepackage{amsmath,amsthm, amscd, amssymb, amsfonts, mathrsfs}
\usepackage[all]{xy}
\usepackage{color}

\usepackage{calligra}
\usepackage[T1]{fontenc}

\newcommand{\se}{\mathfrak S}

\newcommand{\com}[1]{\textcolor{blue}{\textbf{#1}}}

\newcommand{\Ga}{\varGamma}
\newcommand{\La}{\varLambda}

\newcommand\Supp{\operatorname{Supp}}
\newcommand\Soc{\operatorname{Soc}}

\newcommand\add{\operatorname{add}}

\newcommand{\cS}{\mathcal{S}}

\newcommand{\cA}{\mathcal{A}}
\newcommand{\toba}{\mathcal{B}}
\newcommand{\wtoba}{\widehat{\mathcal{B}}}
\newcommand{\C}{{\mathcal C}}
\newcommand{\D}{{\mathcal D}}
\newcommand{\Ee}{{\mathcal E}}
\newcommand{\F}{{\mathcal F}}

\newcommand{\cJ}{\mathcal{J}}
\newcommand{\J}{{\mathcal J}}

\newcommand{\M}{{\mathcal M}}

\newcommand{\oc}{{\mathcal O}}
\newcommand{\Oc}{{\mathcal O}}
\newcommand{\Pc}{{\mathcal P}}

\newcommand{\R}{\mathbb R}
\newcommand{\Rc}{{\mathcal R}}

\newcommand{\Ss}{{\mathcal S}}
\def\Tc{\mathcal{T}}

\newcommand\Udcp{\mathrm{U}}
\newcommand\Ulus{\mathcal{U}}

\newcommand{\lus}{{\mathcal L}}

\newtheorem{lema}{Lemma}[section]
\newtheorem{prop}[lema]{Proposition}

\newtheorem{Thm}[lema]{Theorem}

\theoremstyle{definition}
\newtheorem{Def}[lema]{Definition}
\newtheorem{definition}[lema]{Definition}
\newtheorem{Rem}[lema]{Remark}

\newtheorem{Ex}[lema]{Example}
\newtheorem{Exs}[lema]{Examples}

\newtheorem{claim}{Claim}
\newtheorem{step}{Step}

\usepackage{hhline}



\newcommand{\ub}{{\mathbf u}}
\newcommand{\vb}{{\mathbf v}}

\newcommand{\fd}{finite-dimen\-sional}
\newcommand{\Fd}{Finite-dimensional}

\newcommand{\ba}{\mathbf{a}}

\renewcommand{\_}[1]{_{\left( #1 \right)}}
\renewcommand{\^}[1]{^{\left( #1 \right)}}

\newcommand{\cou}{\varepsilon }
\newcommand{\ot}{{\otimes}}

\newcommand{\Sn}{{\mathbb S}}

\newcommand{\xij}[1]{x_{(#1)}}

\newcommand\g{\mathfrak{g}}

\newcommand{\ku}{\Bbbk}
\newcommand{\Z}{{\mathbb Z}}
\newcommand{\N}{{\mathbb N}}
\newcommand{\I}{{\mathbb I}}

\newcommand\gA{\mathfrak{A}}

\newcommand{\Co}{\underline{\C}}

\newcommand{\uno}{{\bf 1}}

\newcommand{\ydh}{{}^H_H\mathcal{YD}}

\newcommand{\qdim}{\operatorname{qdim}}
\newcommand{\Imm}{\operatorname{Im}}

\newcommand{\Vect}{\operatorname{Vec}}

\newcommand{\End}{\operatorname{End}}
\newcommand{\Aut}{\operatorname{Aut}}
\newcommand{\Ext}{\operatorname{Ext}}
\newcommand{\Ind}{\operatorname{Ind}}
\newcommand{\Pro}{\operatorname{Pro}}
\newcommand\Res{\operatorname{Res}}

\newcommand\tr{\operatorname{tr}}
\newcommand\Rep{\operatorname{Rep}}
\newcommand\Corep{\operatorname{Corep}}
\newcommand\Repo{\operatorname{\underline{Rep}}}
\newcommand\Corepo{\operatorname{\underline{Corep}}}

\newcommand\sgn{\operatorname{sgn}}
\newcommand\ad{\operatorname{ad}}
\newcommand\Hom{\operatorname{Hom}}


\newcommand\id{\operatorname{id}}

\def\pf{\begin{proof}}
\def\epf{\end{proof}}

\numberwithin{equation}{section}\theoremstyle{plain}

\newcommand\Alg{\operatorname{Alg}}

\newcommand\Ker{\operatorname{Ker}}
\newcommand\Rad{\operatorname{Rad}}

\newcommand\ord{\operatorname{ord}}

\newcommand\Irr{\operatorname{Irr}}
\newcommand\Indesc{\operatorname{Indec}}

\newcommand\md{\operatorname{Mod}}
\newcommand\mdf{\operatorname{mod}}

\newcommand\lm[1]{{{#1}{\text -}\md}}
\newcommand\lmf[1]{{{#1} {\text -}\mdf}}

\newcommand\YDg{^{\ku G}_{\ku G}\mathcal{YD}}

\newcommand\YDgd{^{\ku^G}_{\ku^G}\mathcal{YD}}

\newcommand\YDH{{}^{H}_{H}\mathcal{YD}}

\newcommand\wH{\widetilde H}

\newcommand\U{{\mathfrak {u}}}
\newcommand\B{\U^{\mathfrak {b}}}

\newcommand{\ideal}{\widehat{{\mathcal J}}}
\newcommand{\Ig}{{\mathfrak I}}

\begin{document}



\title[From Hopf algebras to tensor categories]{From Hopf algebras to tensor categories}
\author[Andruskiewitsch, Angiono,  Garc\'ia Iglesias, Torrecillas, Vay]
{N. Andruskiewitsch, I. Angiono, A. Garc\'ia Iglesias,\\ B. Torrecillas, C. Vay}

\address{N. A., I. A., A. G. I., C. V.: FaMAF-CIEM (CONICET), Universidad Nacional de C\'ordoba,
Medina A\-llen\-de s/n, Ciudad Universitaria, 5000 C\' ordoba, Rep\'
ublica Argentina.} \email{(andrus|angiono|aigarcia|vay)@famaf.unc.edu.ar}

\address{B. T.: Universidad de Almer\'\i a, Dpto. \'Algebra y An\'alisis Matem\'atico.
E04120 Almer\'\i a, Spain}

\email{btorreci@ual.es}

\thanks{\noindent 2000 \emph{Mathematics Subject Classification.}
16W30. \newline The work of N. A., I. A., A. G. I. and C. V. was partially supported by CONICET,
FONCyT-ANPCyT and Secyt (UNC). The work of B. T. was was partially supported by MTM 2011-27090, Ministerio de Educaci\'on y Ciencia (Espa\~na) and the project FQM 3128 from Junta Andaluc\'\i a (FEDER)}

\begin{abstract}
This is a survey on spherical Hopf algebras.
We give criteria to decide when a Hopf algebra is spherical and collect examples. We discuss tilting modules as a mean to obtain a fusion subcategory of the non-degenerate quotient of the category of representations of a suitable Hopf algebra.
\end{abstract}

\maketitle

\section*{Introduction}\label{sect:intro}

It follows from its very definition that the category $\Rep H$ of \fd{} representations of a Hopf algebra $H$ is a tensor category.
There is a less obvious way to go from Hopf algebras with some extra structure (called spherical Hopf algebras) to tensor categories.
Spherical Hopf algebras and the procedure to obtain a tensor category from them were introduced by Barrett and Westbury \cite{BaW-adv, BaW-tams},
inspired by previous work by Reshetikhin and Turaev \cite{RT-cmp, RT-inv}, in turn motivated to give a mathematical foundation to the work of Witten \cite{Wi}.
A spherical Hopf algebra has by definition a group-like element that implements the square of the antipode (called a pivot) and satisfies the left-right trace
symmetry condition \eqref{eq:omega-traza}. The classification (or even the characterization)  of spherical Hopf algebras is far from being understood,
but there are two classes to start with. Let us first observe that semisimple spherical Hopf algebras are excluded from our considerations,
since the tensor categories arising from the procedure are identical to the categories of representations. Another remark: any Hopf algebra
is embedded in a pivotal one, so that the trace condition \eqref{eq:omega-traza} is really the crucial point. Now the two classes we mean are

\begin{itemize}
  \item Hopf algebras with involutory pivot, or what is more or less the same, with $\Ss^4 = \id$. Here the trace condition follows for free, and the quantum dimensions will be (positive and negative) integers.

  \item Ribbon Hopf algebras \cite{RT-cmp, RT-inv}.
\end{itemize}

It is easy to characterize pointed or copointed Hopf algebras with $\Ss^4 = \id$; so we have many examples of (pointed or copointed)
spherical Hopf algebras with involutory pivot, most of them not even quasi-triangular, see Section \ref{sec:inv-pivot}.
On the other hand, any quasitriangular Hopf algebra is embedded in a ribbon one \cite{RT-cmp}; combined with the construction
of the Drinfeld double, we see that any \fd{} Hopf algebra gives rise to a ribbon one. So, we have plenty of examples of spherical Hopf algebras.

The procedure to get a tensor category from a spherical Hopf algebra $H$ consists in taking a suitable quotient $\Repo H$ of the category $\Rep H$. This appears in \cite{BaW-adv} but similar ideas can be found elsewhere, see e.~g. \cite{GK-inv, Kn}. The resulting spherical categories are semisimple but seldom have a finite number of irreducibles, that is, they are seldom fusion categories in the sense of \cite{ENO}. We are interested in describing fusion tensor subcategories of $\Repo H$ for suitable $H$. This turns out to be a tricky problem. First, if the pivot is involutive, then the fusion subcategories of $\Repo H$ are integral, see Proposition \ref{prop:spherical-involutory}.
The only way we know is through tilting modules; but it seems to us that there is no general method, just a clever recipe that works.
This procedure has a significant outcome in the case of quantum groups at roots of one, where the celebrated Verlinde categories are obtained \cite{AP}; see also \cite{Sa} for a self-contained exposition and \cite{Mat} for similar results in the setting of algebraic groups over fields of positive characteristic. One should also mention that the Verlinde categories can be also constructed from vertex operator algebras related to affine Kac-moody algebras, see \cite{BK, Hu, HuL, KaLu} and references therein; the comparison of these two approaches is highly non-trivial. Another approach, at least for $SL(n)$, was proposed in \cite{hayashi} via face algebras (a notion predecessor of weak Hopf algebras).

The paper is organized as follows. Section \ref{sec:preliminaries} contains some information about the structure of Hopf algebras and notation used later in the paper. Section \ref{sec:sph-hopf} is devoted to spherical Hopf algebras. In Section \ref{sec:titling2} we discuss tilting modules and how this recipe would work for some \fd{} pointed Hopf algebras associated to Nichols algebras of diagonal type, that might be thought of as generalizations of the small quantum groups of Lusztig.

\section{Preliminaries}\label{sec:preliminaries}

\subsection{Notations}
Let $\ku$ be an  algebraically closed field of characteristic 0 and $\ku^{\times}$ its multiplicative group of units.
All vector spaces, algebras, unadorned Hom and $\ot$ are over $\ku$.
By convention, $\N = \{1, 2, \dots\}$ and $\N_0 = \N \cup \{0\}$.

Let $G$ be a finite group. We denote by $Z(G)$
the center of $G$ and by $\Irr G$ the set of isomorphism classes
of irreducible representations of $G$. If $g\in G$, we denote
by $C_{G}(g)$ the centralizer of $g$ in $G$. The conjugacy class
of $g$ is denoted by $\oc_g$ or by $\oc^G_g$, when emphasis on the group
is needed.

Let $A$ be an algebra. We denote by $Z(A)$
the center of $A$. The category of $A$-modules is denoted $\lm{A}$; the full subcategory of \fd{} objects is denoted $\lmf{A}$.
The set of isomorphism classes of irreducible\footnote{that is, minimal, see page \pageref{page:minimal}.} objects in an
abelian category $\C$ is denoted $\Irr \C$; we use the abbreviation $\Irr A$ instead of $\Irr \lmf{A}$.

\subsection{Tensor categories}
We refer to \cite{BK, McL, EGNO, ENO, EO} for basic results and terminology on tensor and monoidal categories.
A monoidal category is one with tensor product and unit, denoted $\uno$; thus $\End (\uno)$ is a monoid.
A monoidal category is rigid when it has right and left dualities. In this article, we understand by
tensor category a monoidal rigid abelian $\ku$-linear category, with $\End (\uno)\simeq \ku$.
A particular important class of tensor categories is that of fusion categories, that is semisimple tensor categories with finite set of isomorphism classes of simple objects, that includes the unit object, and \fd{} spaces of morphisms.
Another important class of tensor categories is that of braided tensor
categories, i.~e. those with a commutativity constraint $c_{V,W}: V \otimes W \to W \ot V$ for every objects $V$ and $W$.
A braided vector space is a pair $(V, c)$ where $V$ is a vector space and $c\in GL(V\ot V)$ satisfies $(c\ot\id)(\id\ot c)(c\ot\id) = (\id\ot c)(c\ot\id)(\id\ot c)$;
this notion is closely related to that of braided tensor category.

\subsection{Hopf algebras}\label{subsec:hopf}

We use standard notation for Hopf algebras (always assumed with bijective antipode); $\Delta$,
$\cou$,  $\Ss$, denote respectively the comultiplication, the counit, and the antipode. For the first, we use
the Heyneman-Sweedler notation $\Delta(x) = x\_{1}\otimes x\_{2}$.
The tensor category of \fd{} representations of a Hopf algebra $H$ is denoted $\Rep H$ instead of $\lmf{H}$, to stress the tensor structure. There are two duality endo-functors of $\Rep H$ composing the transpose of the action with either the antipode or its inverse: if $M \in \Rep H$, then $M^* = \Hom(M, \ku) = {}^*M$, with actions
\begin{align*}
\langle h\cdot f, m\rangle &= \langle f, \Ss(h)\cdot m\rangle, &
\langle h\cdot g, m\rangle &= \langle g, \Ss^{-1}(h)\cdot m\rangle,
\end{align*}
for $h\in H$, $f\in M^*$, $g\in {}^*M$, $m\in M$.

\smallbreak The tensor category of \fd{} corepresentations of a Hopf algebra $H$, i.~e. right comodules, is denoted $\Corep H$.
The coaction map of $V\in \Corep H$ is denoted $\rho =\rho_V:V\to V\otimes H$; in  Heyneman-Sweedler notation, $\rho(v) = v\_{0}\otimes v\_{1}$, $v\in V$.
Also, the coaction map of a left comodule $W$ is denoted $\delta =\delta_W: W\to W\otimes H$; that is, $\delta(w) = w\_{-1}\otimes w\_{0}$, $w\in W$.

A Yetter-Drinfeld module $V$ over a Hopf algebra $H$ is simultaneously a left $H$-module and a left $H$-comodule, subject to the
compatibility condition $\delta(h\cdot v) = h\_1 v\_{-1}\Ss(h\_3) \ot h\_2\cdot v\_0$ for $v\in V$, $h\in H$. The category $\ydh$ of Yetter-Drinfeld modules over $H$ is a braided tensor category with braiding $c(v\ot w) = v\_{-1}\cdot w \ot v\_0$, see e.~g. \cite{AS-cambr,Mo}; when $\dim H < \infty$, $\ydh$ coincides with the category of representations of the Drinfeld double $D(H)$.

\smallbreak Let $H$ be a Hopf algebra. A basic list of $H$-invariants is
\begin{itemize}
  \item The group $G(H)$ of group-like elements of $H$,
  \item the coradical $H_0 =$ largest cosemisimple subcoalgebra of $H$,
  \item the coradical filtration of $H$.
\end{itemize}

Assume that $H_0$ is a Hopf subalgebra of $H$.
In this case, another fundamental invariant of $H$ is the \emph{infinitesimal braiding}, a Yetter-Drinfeld module $V$ over $H_0$, see \cite{AS-cambr}.
We shall consider two particular cases:

\begin{itemize}
  \item
The Hopf algebra $H$ is \emph{pointed} if $H_0 = \ku G(H)$.

  \item
The Hopf algebra $H$ is \emph{copointed} if $H_0 = \ku^G$ for a finite group $G$.
\end{itemize}

Recall that $x\in H$ is $(g,1)$ skew-primitive when $\Delta(x) = x\ot 1 + g\ot x$; necessarily, $g\in G(H)$. If $H$ is generated as an algebra by group-like and skew-primitive elements, then it is pointed.

We shall need the description of \emph{Yetter-Drinfeld modules} over $\ku G$, $G$ a finite group; these are $G$-graded vector
spaces $M = \oplus_{g\in G} M_g$ provided with a $G$-module structure such that
$g\cdot M_t = M_{gtg^{-1}}$ for any $g,t\in G$. The category $\YDg$ of
Yetter-Drinfeld modules over $G$ is semisimple and its irreducible objects
are parameterized by pairs $(\oc, \rho)$, where $\oc$ is a conjugacy class of $G$ and $\rho\in \Irr C_{G}(g)$, $g\in \oc$
fixed. We describe the corresponding irreducible
Yetter-Drinfeld module $M(\oc, \rho)$. Let $g_1 = g$, \dots, $g_{m}$ be a
numeration of $\oc$ and let $x_i\in G$ such that $x_i \, g\, x_i^{-1} = g_i$ for all
$1\le i \le m$. Then
\[
M(\oc, \rho) =\operatorname{Ind}^G_{C_G(g)}V=\oplus_{1\le i \le m} x_i\otimes V.
\]
Let $x_iv := x_i\otimes v \in M(\oc,\rho)$, $1\le i \le m$, $v\in V$.
The Yetter-Drinfeld module $M(\oc,\rho)$
is a braided vector space with braiding given by
\begin{equation}
\label{yd-braiding}
c(x_iv\otimes x_jw) = g_i\cdot(x_jw)\otimes x_iv =
x_h\,\rho(\gamma)(w) \otimes x_iv
\end{equation}
for any $1\le i,j\le m$, $v,w\in V$, where $g_ix_j = x_h\gamma$ for unique $h$,
$1\le h \le m$, and $\gamma \in C_{G}(g)$. Now the categories $\YDg$ and $\YDgd$ are tensor equivalent, so that a
similar description of the objects of the latter holds, see e.~g. \cite{AV}.

\medbreak
The following notion is appropriate to describe
all braided vector spaces arising as Yetter-Drinfeld modules over some finite abelian group.
A braided vector space $(V,c)$ is of \emph{diagonal type} if there
exist $q_{ij}\in \ku^{\times}$ and a basis $\{x_i\}_{i\in I}$ of $V$
such that $c(x_i \ot x_j)= q _{ij} x_j \ot x_i$, for each pair
$i,j\in I$. In such case, we say that $i,j\in I$ are \emph{connected} if
there exist $i_k\in I$, $k=0,1,\ldots,n$, such that $i_0=i$,
$i_n=j$, and $q_{i_{k-1},i_k}q_{i_k,i_{k-1}}\neq 1$, $1\le k \le n$. It establishes
an equivalence relation on $I$. The equivalence classes are called
connected components, and $V$ is \emph{connected} if it has a unique
component.

\medbreak When $H=\ku \Gamma$, where $\Gamma$ is a finite abelian group, each $V\in \YDH$ is a
braided vector space of diagonal type. Indeed, $V = \oplus_{g\in
\Gamma, \chi\in \widehat{\Gamma}}V_{g}^{\chi}$, where  $V_{g} = \{v\in V \mid \delta(v) = g\otimes
v\}$, $V^{\chi} = \{v\in V \mid  g \cdot v = \chi(g)v \text{ for all
} g \in \Gamma\}$, $V_{g}^{\chi}
= V^{\chi} \cap V_{g}$. Note that $ c(x\otimes y) =\chi(g) y\otimes x$, for each $x\in V_{g}$, $g \in \Gamma$, $y\in
V^{\chi}$, $\chi \in \widehat{\Gamma}$.
On the other hand, we can realize every braided vector space of diagonal type as a Yetter-Drinfeld module over the group algebra of an
appropriate abelian group.

\subsection{Nichols algebras}\label{subsec:nichols}

Let $H$ be a Hopf algebra.
We shall say \emph{braided} Hopf algebra for a Hopf algebra in the braided tensor category $\ydh$.
Given $V\in \YDH$, the \emph{Nichols algebra} of $V$ is the braided
graded Hopf algebra $\toba(V)=\oplus_{n\geq 0}\toba^n(V)$ satisfying
the following conditions:
\begin{itemize}
  \item $\toba^0(V)\simeq\ku$, $\toba^1(V)\simeq V$ as
  Yetter-Drinfeld modules over $H$;
  \item $\toba^1(V)=\Pc\big(\toba(V)\big)$, the set of primitives elements of
  $\toba(V)$;
  \item $\toba(V)$ is generated as an algebra by $\toba^1(V)$.
\end{itemize}
The Nichols algebra of $V$ exists and is unique up to isomorphism.
We sketch a way to construct $\toba(V)$ and prove its
unicity, see \cite{AS-cambr}. Note that the tensor algebra $T(V)$
admits a unique structure of graded braided Hopf algebra in $\YDH$
such that $V \subseteq \Pc(V)$. Consider the class $\se$ of all the
homogeneous two-sided Hopf ideals $I \subseteq T(V)$ such that $I$
is generated by homogeneous elements of degree $\geq 2$ and is a
Yetter-Drinfeld submodule of $T(V)$. Then $\toba(V)$ is the
quotient of $T(V)$ by the maximal element $I(V)$ of $\se$. Thus, the canonical projection $\pi:T(V)\to \toba(V)$ is a
Hopf algebra surjection in $\YDH$.

\smallbreak
Braided vector spaces of diagonal type with \fd{} Nichols algebra were classified in \cite{He-adv}; the explicit defining relations were given in \cite{Ang3}, using the results of \cite{Ang2}.
Presently we understand that the list of braidings of diagonal type with \fd{} Nichols algebra given in \cite{He-adv} is divided into three parts:

\begin{enumerate}
	\item Standard type \cite{Ang1}, comprising Cartan type \cite{AS-adv};
	
	\item Super type \cite{AAY};
	
	\item Unidentified type \cite{Ang4}.
\end{enumerate}

\section{Spherical Hopf algebras}\label{sec:sph-hopf}
\subsection{Spherical Hopf algebras} The notion of \emph{spherical Hopf algebra} was introduced in \cite{BaW-adv}: this
is a pair $(H, \omega)$, where $H$ is a Hopf algebra and  $\omega\in G(H)$ such that
\begin{align}\label{eq:omega-square-antipode}
\Ss^2(x) &= \omega x \omega^{-1}, & x&\in H, \\
\label{eq:omega-traza}
\tr_V(\vartheta \omega) &= \tr_V(\vartheta \omega^{-1}),  & \vartheta &\in \End_H(V), &&
\end{align}
for all $V\in \Rep H$.
We  say that $\omega\in G(H)$ is a \emph{pivot} when it satisfies \eqref{eq:omega-square-antipode}; pairs $(H, \omega)$ with $\omega$ a pivot are called \emph{pivotal Hopf algebras}. The pivot is not unique but
it is determined up to multiplication by an element in the group $G(H) \cap Z(H)$.
A \emph{spherical element} is a pivot that fulfills \eqref{eq:omega-traza}.

The implementation of the square of the antipode by conjugation by a group-like, condition \eqref{eq:omega-square-antipode}, is easy to verify. For instance, $\omega$ should belong to the center of $G(H)$; thus, if this group is centerless, then \eqref{eq:omega-square-antipode} does not hold in $H$.
Further, the failure of \eqref{eq:omega-square-antipode} is not difficult to remedy by adjoining a group-like element \cite[Section 2]{So}. Namely, given a Hopf algebra $H$, consider a cyclic group $\Gamma$  of order $\ord \Ss^2$ with a generator $g$; let $g$ act on $H$ as $\Ss^2$. Then the corresponding smash product $E(H) := H\# \ku \Gamma$  is a Hopf algebra where \eqref{eq:omega-square-antipode} holds\footnote{If $H$ has finite dimension $n\in \N$, then the order of $\Ss^2$ is finite; in fact, it divides $2n$, by Radford's formula on $\Ss^4$ and the Nichols-Z\"oller Theorem.}.

Condition \eqref{eq:omega-traza} is less apparent. If $H$ is a \fd{} Hopf algebra and
$\omega\in H$ a pivot, then \eqref{eq:omega-traza} holds in the following instances:
\begin{itemize}
  \item $\omega$ is an involution.

  \item There exists a Hopf subalgebra $K$ of $H$ such that $\omega \in K$ and $(K, \omega)$ is spherical-- since $\End_H(V) \subset \End_K(V)$.

  \item $H$ is ribbon, see Subsection \ref{subsec:ribbon}.

  \item All \fd{} $H$-modules are naturally self-dual.

\end{itemize}

\begin{proof}
 By hypothesis, there exists a natural isomorphism $F:\id\rightarrow {}^*$. Let $V\in\Rep H$ and $\vartheta\in\End_H(V)$. Then
\begin{align*}
\tr(\vartheta\omega)&=\sum_i\langle\alpha_i,\vartheta\omega v_i\rangle=\sum_i\langle\vartheta^*\alpha_i,\omega v_i\rangle=\sum_i\langle F_V\vartheta F_V^{-1}\alpha_i,\omega v_i\rangle\\
&=\sum_i\langle F_V\vartheta\omega^{-1} F_V^{-1}\alpha_i, v_i\rangle=\tr(F_V\vartheta\omega^{-1} F_V^{-1})=\tr(\vartheta\omega^{-1}).
\end{align*}
Here $\{v_i\}$ and $\{\alpha_i\}$ are dual basis of $V$ and $V^*$ respectively.
\end{proof}

\begin{prop}\label{prop:reduccion a irreducibles}
A pivotal Hopf algebra $(H,\omega)$ is spherical if and only if
\eqref{eq:omega-traza} holds for all $S\in \Irr H$.
\end{prop}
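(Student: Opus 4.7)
The forward implication is immediate, since $\Irr H\subset\Rep H$. For the converse, the plan is to reduce the verification of \eqref{eq:omega-traza} on an arbitrary $V\in\Rep H$ with arbitrary $\vartheta\in\End_H(V)$ to the hypothesis on simples, in two steps. Set $T(V,\vartheta):=\tr_V(\vartheta\omega)-\tr_V(\vartheta\omega^{-1})$; the goal is $T(V,\vartheta)=0$.

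First reduction: decompose $V=\bigoplus_k V_k$ into indecomposable $H$-submodules. Each $V_k$ is stable under $\omega^{\pm 1}$, and writing $\vartheta=(\vartheta_{ij})$ in block form with $\vartheta_{ij}\in\Hom_H(V_j,V_i)$, the operator $\vartheta\omega^{\pm 1}$ has the same block pattern, so only the diagonal blocks contribute to the trace. Thus $T(V,\vartheta)=\sum_k T(V_k,\vartheta_{kk})$, and one may assume $V$ is indecomposable.

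Second reduction: for $V$ indecomposable, Fitting's lemma together with the fact that $\ku$ is algebraically closed shows that $\End_H(V)$ is a local $\ku$-algebra with residue field $\ku$. Hence $\vartheta=\lambda\,\id_V+\nu$, with $\lambda\in\ku$ and $\nu\in\Rad\End_H(V)$ nilpotent. For the scalar part, fix a composition series $0=V^0\subset\cdots\subset V^n=V$ with simple subquotients $S_1,\dots,S_n$. In a basis adapted to this filtration, $\omega^{\pm 1}$ act as block upper-triangular matrices whose diagonal blocks are the actions on the $S_j$, so $\tr_V(\omega^{\pm 1})=\sum_j\tr_{S_j}(\omega^{\pm 1})$; by Schur's lemma $\End_H(S_j)=\ku$, so the hypothesis applied to each $S_j$ (with $\vartheta=\id_{S_j}$) gives $\tr_{S_j}(\omega)=\tr_{S_j}(\omega^{-1})$, and hence $\tr_V(\lambda\,\id_V\cdot\omega)=\tr_V(\lambda\,\id_V\cdot\omega^{-1})$.

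For the nilpotent part, the key observation is that $\nu$ is $H$-linear, so as operators on $V$ it commutes with $\omega$. Therefore $(\nu\omega)^n=\nu^n\omega^n=0$ for $n$ large, so $\nu\omega$ is a nilpotent endomorphism of $V$, whence $\tr_V(\nu\omega)=0$; the same argument gives $\tr_V(\nu\omega^{-1})=0$. Combining both parts yields $T(V,\vartheta)=0$. The only delicate point is this last commutation: without the $H$-linearity of $\nu$ one could not conclude that $\nu\omega$ is nilpotent, and the treatment of the radical part of $\vartheta$ would break down.
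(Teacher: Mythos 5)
Your proof is correct, and it takes a genuinely different route from the paper's. The paper proves the converse by induction on Loewy length: since any $f\in\End_H(M)$ preserves $\Soc M$, one gets a block-triangular matrix for both $f$ and $f\omega^{\pm1}$ relative to the short exact sequence $0\to\Soc M\to M\to M/\Soc M\to 0$, and the trace identity follows from the inductive hypothesis on $M/\Soc M$ together with the case of semisimple modules (itself reduced to irreducibles by additivity of the trace over direct sums). You instead reduce to an indecomposable summand and invoke Fitting's lemma to write $\vartheta=\lambda\,\id_V+\nu$ with $\nu$ in the radical; you then dispatch the scalar part by the same kind of block-triangular argument along a composition series, and the nilpotent part by the observation that $\nu$, being $H$-linear, commutes with the action of $\omega$, so $\nu\omega^{\pm1}$ is nilpotent and hence traceless. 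That last observation is exactly the idea the paper uses later, in Step 1 of the proof of Theorem \ref{prop:repoH-ss} (the Jacobson radical of $\End_H(X)$ lies in the negligible ideal $\cJ(X,X)$); you have imported it into this proposition, which makes the role of the radical explicit at the cost of invoking Fitting's lemma, whereas the paper's Loewy induction avoids Fitting's lemma but treats the radical contribution implicitly. Both approaches are essentially equal in length and rigor.
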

\pf The Proposition follows from the following two claims.
\begin{claim}
If  \eqref{eq:omega-traza} holds for $M_1,M_2\in \Rep H$, then it holds for $M_1\oplus M_2$.
\end{claim}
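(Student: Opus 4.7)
The plan is to exploit the block decomposition induced by the direct sum. Set $M = M_1 \oplus M_2$ with inclusions $\iota_i : M_i \hookrightarrow M$ and projections $p_i : M \twoheadrightarrow M_i$, all $H$-linear. Given any $\vartheta \in \End_H(M)$, decompose it as a $2 \times 2$ matrix by $\vartheta_{ij} = p_i \vartheta \iota_j \in \Hom_H(M_j, M_i)$. The crucial observation is that the action of $\omega$ on $M$ respects the decomposition, so $\omega$ (and hence $\omega^{-1}$) acts diagonally, i.e.\ $p_i \omega \iota_j = 0$ for $i \neq j$.

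First, I would observe that the diagonal blocks $\vartheta_{ii} \in \End_H(M_i)$ are genuine $H$-linear endomorphisms of the summands, so the hypothesis applies to them: $\tr_{M_i}(\vartheta_{ii}\,\omega) = \tr_{M_i}(\vartheta_{ii}\,\omega^{-1})$ for $i=1,2$. Then I would compute the trace on $M$ by choosing a basis of $M$ obtained as the union of bases of $M_1$ and $M_2$; since $\omega$ is block diagonal, the matrix of $\vartheta\omega$ has diagonal blocks $\vartheta_{ii}\,\omega|_{M_i}$, and the off-diagonal blocks contribute nothing to the trace. Thus
\begin{align*}
\tr_M(\vartheta\omega) &= \tr_{M_1}(\vartheta_{11}\,\omega) + \tr_{M_2}(\vartheta_{22}\,\omega) \\
&= \tr_{M_1}(\vartheta_{11}\,\omega^{-1}) + \tr_{M_2}(\vartheta_{22}\,\omega^{-1}) = \tr_M(\vartheta\,\omega^{-1}),
\end{align*}
which is exactly \eqref{eq:omega-traza} for $M_1 \oplus M_2$.

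There is no real obstacle here; the only point to be careful about is that the off-diagonal maps $\vartheta_{12}$ and $\vartheta_{21}$ need not vanish (the summands might share constituents), but they are irrelevant to the trace computation because $\omega$ acts block diagonally. This is what makes the reduction to indecomposable (and eventually irreducible) modules possible.
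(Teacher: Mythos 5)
Your proof is correct and follows essentially the same route as the paper's: block-decompose $\vartheta$ via the projections and inclusions, note that $\omega$ acts block-diagonally so only the diagonal blocks $\vartheta_{ii}$ contribute to the trace, and apply the hypothesis to each summand. Your extra remark about the off-diagonal blocks being possibly nonzero but irrelevant is a helpful clarification, but the argument is the same as in the paper.
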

Indeed, let $h\in \End_H(M_1\oplus M_2)$. Let $\pi_j:M\to M_j$, $\iota_i:M_i\to
M$ be the projection and the inclusion, for $1\leq i,j\leq 2$. Then $h=\sum_{1\leq i,j\leq
2}h_{ij}$, where $h_{ij}=\pi_i\circ h\circ \iota_j\in \Hom_H(M_j,M_i)$. In particular,
$h_{ij}\omega=(h\omega)_{ij}$ as linear maps. Now, we have
that $\tr_M(h)=\tr_{M_1}(h_{11})+\tr_{M_2}(h_{22})$ and thus
\begin{align*}
 \tr_M(h\omega)&=\tr_{M_1}(h\omega)_{11}+\tr_{M_2}(h\omega)_{22}=\tr_{M_1}(h_{11}\omega)
+\tr_{M_2}(h_{22}\omega)\\
&=\tr_{M_1}(h_{11}\omega^{-1})+\tr_{M_2}(h_{22}\omega^{-1})=\tr_{M_1
}(h\omega^{-1})_{11}+\tr_{M_2}(h\omega^{-1})_{22}\\
&=\tr_M(h\omega^{-1}).
\end{align*}
\begin{claim}
If \eqref{eq:omega-traza} holds for every semisimple $H$-module, then $H$ is spherical.
\end{claim}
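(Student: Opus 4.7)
The plan is to reduce the general case to the semisimple case by means of the radical (or socle) filtration, which is functorial and therefore preserved by any $H$-module endomorphism. Throughout, $V \in \Rep H$ and $\vartheta \in \End_H(V)$ are fixed.

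First, I would consider the radical filtration $V = J^0V \supseteq JV \supseteq J^2V \supseteq \cdots \supseteq J^nV = 0$, where $J = \Rad \End_H(V)$ acts on $V$ and produces a chain of $H$-submodules whose successive quotients $V_i := J^iV/J^{i+1}V$ are semisimple $H$-modules. Two observations are needed: (a) the endomorphism $\vartheta$ preserves this filtration, because the radical is a functorial construction and hence stable under any $H$-module endomorphism; (b) the action of $\omega\in G(H)\subseteq H$ also preserves the filtration, simply because each $J^iV$ is an $H$-submodule of $V$. Therefore $\vartheta\omega$ and $\vartheta\omega^{-1}$ descend to $\ku$-linear endomorphisms $\overline{\vartheta\omega}$ and $\overline{\vartheta\omega^{-1}}$ of each $V_i$, and the induced map $\bar{\vartheta}\in\End(V_i)$ is still an $H$-module endomorphism of the semisimple module $V_i$.

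Next, I would use additivity of the trace along a filtration of $V$ by invariant subspaces: for any linear endomorphism $T$ of $V$ preserving the filtration,
\begin{equation*}
\tr_V(T) \;=\; \sum_{i} \tr_{V_i}(\bar T).
\end{equation*}
Applied to $T=\vartheta\omega$ and $T=\vartheta\omega^{-1}$, this gives
\begin{equation*}
\tr_V(\vartheta\omega) \;=\; \sum_i \tr_{V_i}(\bar\vartheta\,\bar\omega),
\qquad
\tr_V(\vartheta\omega^{-1}) \;=\; \sum_i \tr_{V_i}(\bar\vartheta\,\bar\omega^{-1}).
\end{equation*}
Since each $V_i$ is semisimple and $\bar\vartheta\in\End_H(V_i)$, the hypothesis yields the termwise equality $\tr_{V_i}(\bar\vartheta\,\bar\omega)=\tr_{V_i}(\bar\vartheta\,\bar\omega^{-1})$; summing over $i$ concludes the proof. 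Combined with the previous claim (stability under direct sums) and Maschke-style reduction to simples, this also shows that it is enough to test \eqref{eq:omega-traza} on $\Irr H$, as required by the Proposition.

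The only delicate point is the invariance of the filtration under $\omega$: this is immediate because $J^iV$ is an $H$-submodule, not because $\omega$ is central (it generally is not). The rest is the standard fact that trace is computed on the associated graded, so no serious obstacle is expected.
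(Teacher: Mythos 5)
Your overall strategy — filter $V$ by $H$-submodules with semisimple successive quotients, observe that both $\vartheta$ and $\omega$ preserve the filtration, and use that traces are computed on the associated graded — is sound and is essentially what the paper does (the paper phrases it as an induction on the Loewy length of the socle filtration and a block upper-triangular matrix computation; yours is a direct sum over graded pieces). The difference in presentation is harmless.

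However, there is a genuine error in the filtration you use: you take $J = \Rad\End_H(V)$, and the successive quotients $J^iV/J^{i+1}V$ need \emph{not} be semisimple $H$-modules. The paper's own Sweedler-algebra example gives a counterexample: the module $V^{\pm}$ there is indecomposable and not semisimple, yet $\End_H(V^{\pm}) \simeq \ku$, so $J = \Rad\End_H(V^{\pm}) = 0$. Your filtration collapses to $V^{\pm} \supset 0$, and the single quotient $V_0 = V^{\pm}$ is not semisimple, so the inductive hypothesis cannot be applied to it. More generally, $J^iV$ with $J = \Rad\End_H(V)$ only sees the internal structure detected by endomorphisms; modules with trivial endomorphism ring but nontrivial composition series (which are exactly the troublesome ones for this claim) are invisible to it. The fix is to filter by the radical of $V$ as an $H$-module — $\Rad^0 V = V$, $\Rad^{i+1}V = \Rad(\Rad^i V)$ — or by the socle filtration, as the paper does. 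With that change your observations (a) and (b) still hold ($\vartheta$ preserves the radical/socle filtration because it is an intrinsic module-theoretic construction; $\omega$ preserves it because each term is an $H$-submodule, not because $\omega$ is central), and the rest of your argument goes through.
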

Let $M\in\Rep H$ and let $M_0\subset M_1\subset\dots\subset
M_k=M$ be the {\it Loewy filtration} of $M$, that is $M_0=\Soc M$,
$M_{i+1}/M_i \simeq \Soc(M/M_i)$, $i=0,\dots,
j-1$. In particular, $M_{i+1}/M_i$ is semisimple. We prove the claim by induction on the
Loewy length $k$ of $M$. The case $k=0$ is the hypothesis. Assume $k>0$; set
$S=\Soc M$ and consider the exact sequence $0\to S\to M\to M/S\to 0$. Hence
the Loewy length of $\widetilde M=M/S$ is $k-1$ and thus \eqref{eq:omega-traza} holds
for it. Also, \eqref{eq:omega-traza} holds for $S$ by hypothesis. Let $f\in
\End_H(M)$, then $f(S)\subseteq S$ and thus $f$ induces $f_1=f_{|S}\in\End_H(S)$ by
restriction and factorizes through $f_2\in \End_H(\widetilde M)$. Therefore, we can
choose a basis of $S$ and complete it to a basis of $M$ in such a way that $f$ in this new
basis is represented by $[f]=\left[\begin{smallmatrix}
                                   [f_1]&*\\
                                   0&[f_2]
                                  \end{smallmatrix}\right]$. Also, as $\omega$ preserves
$S$ and $f$ is an $H$-morphism, it follows that $[f\omega]=\left[\begin{smallmatrix}
                                   [f_1\omega]&*\\
                                   0&[f_2\omega]
                                  \end{smallmatrix}\right]$ and thus
$\tr_M(f\omega)=\tr_M(f\omega^{-1})$.
\epf

\begin{Ex}
Let $H$ be a basic Hopf algebra, i.~e. all \fd{} simple modules have dimension 1;
when $H$ itself is \fd{}, this amounts to the dual of $H$ being pointed.
If $\omega\in G(H)$ is a pivot, then $H$ is spherical if and only if $\chi(\omega) \in \{\pm 1\}$ for all $\chi\in \Alg(H, \ku)$.
Assume that $H$ is \fd{}; then $H$ is spherical if and only if $\omega$ is involutive. For, $\omega -\omega^{-1}\in \bigcap_{\chi\in \Alg(H, \ku)}\Ker \chi = \Rad H$, and $\Rad H \cap \ku[\omega] = 0$.
\end{Ex}

\subsection{Spherical Categories}

A monoidal rigid category $\C$ is pivotal when $X^{**}$ is  monoi\-dally isomorphic to $X$ \cite{FY-adv};
this implies that the left and right dualities coincide.
For instance, if $H$ is a Hopf algebra and $\omega\in G(H)$ is a pivot, then $\Rep H$ is pivotal \cite[Proposition 3.6]{BaW-adv}.
In a pivotal category $\C$, there are left and right traces $\tr_L, \tr_R: \End (X) \to \End (\uno)$, for any $X\in \C$.
If $\C = \Rep H$ and $V\in \Rep H$, then these traces are defined by
\begin{align}\label{eq:tr-sph-Ha}
\tr_L(\vartheta) &= \tr_V(\vartheta \omega), & \tr_R(\vartheta)  &= \tr_V(\vartheta \omega^{-1}), & \vartheta &\in \End_H(V).
\end{align}
A spherical category is a pivotal one where the left and right traces coincide. Thus, $\Rep H$ is a spherical category, whenever $H$ is a spherical Hopf algebra.

\begin{Rem}\label{obs:spherical-subcat}
If $\D$ is a rigid monoidal (full) subcategory of a spherical category $\C$, then $\D$ is also spherical.
\end{Rem}

\subsection{Quantum dimensions}\label{subsec:qdim}

The \emph{quantum dimension} of an object $X$ in a spherical category $\C$ is given by $\qdim V := \tr_L(\id_X)$. In particular, if $H$ is a spherical Hopf algebra, then
\begin{align}\label{eq:qdim-sph-Ha}
\qdim M &= \tr_M(\omega) = \tr_M(\omega^{-1}), & M&\in \Rep H.
\end{align}

\subsubsection{} If $\C$ is a spherical tensor category, then $\End (\uno)=\ku$,
and the function $V\mapsto \qdim V$ is a character of the Grothendieck ring of $\C$.
In fact, this map is additive on exact sequences,  as in  the proof of Proposition \ref{prop:reduccion a irreducibles};
also
\begin{equation}\label{eq:qdim-producto tensorial}
\qdim V\ot W=\qdim V \qdim W, \qquad V,W \in \C.
\end{equation}
In consequence, the quantum dimension of any object in a \emph{finite} spherical tensor category $\C$ is an algebraic integer in $\ku$, see \cite[Corollary 1.38.6]{EGNO}.

\subsubsection{}
Let $H$ be a Hopf algebra. Given $L\in \Irr H$, $M\in \Rep H$, we set $(M:L) =$ multiplicity of $L$ in $M$ (i.~e. the number of times that $L$ appears as a Jordan-H\"older factor of $M$). Assume that $(H, \omega)$ is spherical. Let $M\in \Rep H$. Then
\begin{align}\label{eq:qdim-formula}
\qdim M &= \sum_{L\in \Irr H} (M:L)\qdim L.
\end{align}
Here is a way to compute the quantum dimension of $M$: consider the decomposition $M = \oplus_{\rho \in \Irr G(H)} M_\rho$ into isotypical components of the restriction of $M$ to $G(H)$. Since $\omega \in Z(G(H))$, it acts by a scalar $z_\rho$ on the $G(H)$-module affording $\rho \in \Irr G(H)$. Hence
\begin{align}\label{eq:qdim-formula-isotypical}
\qdim M &= \sum_{\rho \in \Irr G(H)} z_\rho \dim M_\rho.
\end{align}
See \cite{CW08, CW10} for a Verlinde formula and other  information on the computation of the quantum dimension in terms of the Grothendieck ring.

\subsubsection{}
If $H$ is a non-semisimple spherical Hopf algebra, then  $\qdim M = 0$ for
any $M \in \Rep H$ projective \cite[Proposition 6.10]{BaW-tams}. More generally, the following result appears in the proofs of \cite[2.16]{EO}, \cite[1.53.1]{EGNO}.

\begin{prop}\label{prop:qdim-proyectivos} Let $\C$ be a non-semisimple pivotal tensor category.
Then $\qdim P=0$ for any projective object $P$. \qed
\end{prop}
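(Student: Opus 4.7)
The plan is to exploit the fact that in a rigid tensor category the class of projective objects forms a tensor ideal, and to combine this with the factorization of the quantum dimension through $P\otimes P^{*}$.

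First, I would recall that in any pivotal category $\C$, the morphism $\qdim P = \tr_{L}(\id_{P})\in \End(\uno)=\ku$ factors as
\[
\uno \xrightarrow{\ \operatorname{coev}_{P}\ } P\otimes P^{*} \xrightarrow{\ \phi\otimes \id\ } P^{**}\otimes P^{*} \xrightarrow{\ \operatorname{ev}_{P^{*}}\ } \uno,
\]
where $\phi\colon P\to P^{**}$ is the pivotal isomorphism. Thus $\qdim P$ is a morphism $\uno\to\uno$ that factors through the object $P\otimes P^{*}$.

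Next, I would observe that $P\otimes P^{*}$ is projective. This is the key ideal property: by rigidity one has the adjunction $\Hom_{\C}(P\otimes P^{*},-)\cong \Hom_{\C}(P,-\otimes P^{**})$, and the right-hand side is an exact functor, since tensoring with $P^{**}$ is exact in any rigid category and $\Hom(P,-)$ is exact by projectivity of $P$. Hence $\qdim P$ factors through a projective object.

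Then I would argue that $\uno$ cannot be projective. Indeed, if $\uno$ were projective, the same ideal argument applied to any $X\in\C$ via $X\cong X\otimes \uno$ would show that every object of $\C$ is projective; equivalently every short exact sequence in $\C$ splits, contradicting the assumption that $\C$ is non-semisimple.

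Finally, I would close by contradiction. Suppose $\qdim P\neq 0$. Since $\End(\uno)=\ku$, the scalar $\qdim P$ is invertible, so $\id_{\uno}=(\qdim P)^{-1}\cdot\qdim P$ also factors through $P\otimes P^{*}$. This expresses $\uno$ as a retract of a projective object, forcing $\uno$ to be projective and contradicting the previous paragraph. Hence $\qdim P = 0$. The main subtlety I expect is simply being careful that the tensor-ideal argument for projectives uses rigidity in an essential way (and that $\End(\uno)=\ku$ is needed to invert the scalar $\qdim P$); everything else is bookkeeping with the pivotal trace.
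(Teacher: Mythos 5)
Your argument is correct. The paper itself does not supply a proof of this proposition but instead defers to \cite[2.16]{EO} and \cite[1.53.1]{EGNO}; the argument you give (factor $\qdim P$ through the projective object $P\otimes P^{*}$, then observe that a nonzero scalar would make $\uno$ a retract of a projective, hence projective, hence the category semisimple) is precisely the argument appearing in those references.
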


\subsection{The non-degenerate quotient}\label{subsec:nondeg-quotient}

Let $\C$ be an additive $\ku$-linear spherical category with $\End (\uno)\simeq \ku$.
For any two objects $X, Y$ in $\C$ there is a bilinear pairing
\begin{align*}
\Theta: \Hom_{\C}(X, Y) \times \Hom_{\C}(Y, X) &\to \ku, & \Theta (fg) &= \tr_L(fg) = \tr_R(gf);
\end{align*}
$\C$ is non-degenerate if $\Theta$ is, for any $X,Y$. By \cite[Theorem 2.9]{BaW-adv}, see also \cite{T-ijm},
any additive spherical category $\C$ gives rise a factor category $\Co$, with
the same objects\footnote{This is a bit misleading, as non-isomorphic objects in $\C$ may became isomorphic in $\Co$.} as $\C$ and morphisms $\Hom_{\Co}(X, Y) := \Hom_{\C}(X, Y)/\cJ(X,Y)$, $X,Y \in \C$, where
\begin{align}
\cJ(X,Y) &= \{f \in  \Hom_{\C}(X, Y): \tr_L(fg) = 0, \forall g\in  \Hom_{\C}(Y, X) \}.
\end{align}
The category $\Co$ is an additive non-degenerate spherical category, but it is not necessarily abelian, even if $\C$ is abelian.
Clearly, the quantum dimensions in $\Rep H$ and $\Repo H$ are the same.
See \cite{Kn} for a general formalism of tensor ideals, that encompasses the construction above.

\smallbreak
We now give more information on $\Repo H$ following \cite[Proposition 3.8]{BaW-adv}.
Let us first point out some precisions on the terminology used
in the literature on additive categories, see e.~g. \cite{harada} and references therein. We shall stick to this terminology
in what follows. Let $\C$ be an additive $k$-linear category, where $k$ is an arbitrary field.

\begin{itemize}
	\item $\C$ is \emph{semisimple} if the algebras $\End (X)$ are semisimple for all $X\in \C$.

\smallbreak	
	\item An object $X$ is \emph{minimal}\label{page:minimal} if any monomorphism $Y \to X$ is either 0 or an isomorphism; hence $\End(X)$ is a division algebra over $k$.

\smallbreak	
	\item $\C$ is \emph{completely reducible} if every object is a direct sum of minimal ones.

\end{itemize}

Beware that in more recent literature in topology (where it is assumed that $k = \ku$), an object $S$ is said to be \emph{simple} when $\End (S) \simeq \ku$; and `$\C$ is
\emph{semisimple}' means that every object in $\C$ is a direct sum of simple ones. For instance, if $\C = \lmf{A}$, $A$ an algebra,
then a minimal object in $\C$ is just a a simple $A$-module;
then $\End(S)\cong \ku$ by the Schur Lemma. But it is well-known that the converse is not true.

\begin{Ex}
Let $H = \ku\langle
x,g\vert x^2, g^2 -1, gx + xg\rangle$ be the Sweedler Hopf algebra. $H$ has two simple
modules, both one dimensional, namely the trivial $V^+$ and $V^-$, where $g$ acts as $-1$.
Consider the non-trivial extension $V^{\pm}\in\Ext_H^1(V^-,V^+)$, that is
$V^{\pm}=\ku v \oplus \ku w$ with action
\begin{align*}
&g\cdot v=v, &&g\cdot w=-w, &&  x\cdot v=w, &&  x\cdot w =0.
\end{align*}
Then $V^{\pm}$ is not simple and $\End_H(V^{\pm})\cong \ku$. It is easy to see that the indecomposable $H$-modules are $V^+$, $V^-$,  $V^{\pm}$
and $V^{\mp} := (V^{\pm})^*$; hence $\Repo H \simeq \Rep \Z/2$ by Step \ref{le:caracterizacion de los simples}
of the proof of Theorem \ref{prop:repoH-ss} below.
Also, notice that in $\Rep H$ it is not true that all endomorphism algebras are semisimple (just take the regular representation); and, related to this, that
$\Hom_{H} (V^{\mp}, V^{\pm})  \neq 0$.
\end{Ex}

However, the converse above is true when the category is semisimple.

\begin{Rem}\label{obs:harada} \cite[1.1, 1.3]{harada}
Let $\C$ be a semisimple additive $k$-linear category, where $k$ is an arbitrary field.

\begin{enumerate}\renewcommand{\theenumi}{\alph{enumi}}   \renewcommand{\labelenumi}{(\theenumi)}
\item If $\alpha: V \to W$ is not zero, then there exists $\beta,\gamma: W\to V$ such that
$\beta\alpha \neq 0$, $\alpha\gamma \neq 0$. If $\Hom_{\C}(V,W) =0$, then $\Hom_{\C}(W, V) =0$.

\item If $V\in\C$ and $\End(V)$ is a division ring, then $V$ is minimal.

\item If $V, W\in\C$ are minimal and non-isomorphic, then $\Hom_{\C}(V, W) =0$.
\end{enumerate}

\end{Rem}

\pf (a) Assume that $\Hom_{\C}(W, V)\alpha =0$. Set $U = V\oplus W$; then $\End_{\C}U \alpha$ is a nilpotent left ideal of $\End_{\C}U$, hence it is 0.

(b) Let $W\neq0$ and $f\in\Hom(W,V)$ be a monomorphism.
Since $f\neq0$, $\Hom(V,W)\neq0$ by (a). Since $\End(V)$ is a division ring, the map $f\circ-: \Hom(V,W) \to \End (V)$ is surjective. Therefore there exists $g\in\Hom(V,W)$ such that $f\circ g=\id_V$.
On the other hand, the map $\End(W)\rightarrow\Hom(W,V)$, $\tilde{h}\mapsto f\circ\tilde{h}$ is injective. Hence $g\circ f=\id_W$ since $f\circ(g\circ f)=f\circ\id_W$. Thus $W\simeq V$.
(c) follows from (a) at once.
\epf

If $H$ is a spherical Hopf algebra, then we denote by
$\Indesc_q H$ the set of isomorphism classes of indecomposable \fd{} $H$-modules with non-zero quantum dimension.

\begin{Thm}\label{prop:repoH-ss} \cite[3.8]{BaW-adv}  Let $H$ be a spherical Hopf algebra with pivot $\omega$.
Then the non-degenerate quotient $\Repo H$ is a completely reducible spherical tensor category, and
$\Irr\Repo H$ is in bijective correspondence with $\Indesc_q H$.
\end{Thm}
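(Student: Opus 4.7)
The plan is to analyse $\Repo H$ through the Krull--Schmidt decomposition of objects of $\Rep H$: indecomposable summands of zero quantum dimension should die in the quotient, while those of non-zero quantum dimension should become pairwise non-isomorphic minimal objects with endomorphism algebra $\ku$. The spherical tensor structure descends to $\Repo H$ because $\cJ$ is a two-sided tensor ideal (it is cut out by the trace forms, which are compatible with tensor product and duality), sphericality is phrased purely in terms of $\tr_L,\tr_R$ on endomorphisms, and quantum dimensions are unchanged by the projection $\Rep H\to\Repo H$.

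The core computation is the following lemma, which I would establish first: if $M\in\Rep H$ and $h\in\End_H(M)$ is nilpotent, then $\tr_L(h)=\tr_M(h\,\omega)=0$. Since $h$ is $H$-linear it commutes with the action of $\omega$ on $M$, so $h$ and $\omega|_M$ generate a commutative finite-dimensional, hence Artinian, subalgebra $A\subseteq\End_{\ku}(M)$. In $A$ the Jacobson radical coincides with the nilradical and contains $h$; therefore $h\,\omega|_M\in\Rad A$ is nilpotent in $\End_{\ku}(M)$, so its trace vanishes.

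Next I would pin down the Hom spaces in $\Repo H$. For indecomposable $M,N\in\Rep H$ with $M\not\cong N$, any $f\in\Hom_H(M,N)$ and $g\in\Hom_H(N,M)$ give $gf\in\End_H(M)$ which cannot be invertible (otherwise an idempotent argument on $fp$, where $p=(gf)^{-1}g$, would force $M\cong N$ using the locality of $\End_H(N)$); hence $gf\in\Rad\End_H(M)$ is nilpotent and $\tr_L(fg)=\tr_L(gf)=0$ by the lemma, so $\Hom_{\Repo H}(M,N)=0$. For $M$ indecomposable, $\End_H(M)=\ku\,\id_M\oplus\Rad\End_H(M)$; if $\qdim M\neq0$, the lemma yields $\Rad\End_H(M)\subseteq\cJ(M,M)$, while for an invertible $f$ the choice $g=f^{-1}$ gives $\tr_L(fg)=\qdim M\neq0$, so $\cJ(M,M)=\Rad\End_H(M)$ and $\End_{\Repo H}(M)\simeq\ku$. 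If instead $\qdim M=0$, the same decomposition applied to $fg$ shows $\tr_L(fg)=0$ for all $f,g\in\End_H(M)$, so $\cJ(M,M)=\End_H(M)$ and $M\simeq0$ in $\Repo H$.

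These three facts combine with Krull--Schmidt to finish. Every $V\in\Rep H$ decomposes as $V=\bigoplus_iM_i$ with $M_i$ indecomposable; in $\Repo H$ the summands with vanishing quantum dimension disappear, the remaining summands lie in $\Indesc_qH$, have endomorphism algebra $\ku$, and are pairwise non-isomorphic: a would-be iso in $\Repo H$ would lift, via the splitting $\End_H=\ku\oplus\Rad$, to elements $gf=\id_M+n_M$ and $fg=\id_N+n_N$ with $n_M,n_N$ nilpotent, hence to an actual iso $M\cong N$ in $\Rep H$. Every $\End_{\Repo H}(V)$ is thus a finite product of copies of $\ku$, so $\Repo H$ is semisimple in the sense adopted here; by Remark~\ref{obs:harada}(b) each surviving indecomposable is minimal, $\Repo H$ is completely reducible, and the assignment $M\mapsto[M]$ yields the bijection $\Irr\Repo H\leftrightarrow\Indesc_qH$. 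The main subtlety is the trace-vanishing lemma for nilpotent $H$-endomorphisms; once that is in hand, the rest is Krull--Schmidt bookkeeping together with Remark~\ref{obs:harada}.
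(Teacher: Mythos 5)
Your proposal is correct and follows essentially the same route as the paper: the trace-vanishing lemma for nilpotent $H$-endomorphisms (equivalently, the paper's observation that the radical of $\End_H(X)$ lies in $\J(X,X)$), Krull--Schmidt decomposition into indecomposables, and Remark~\ref{obs:harada} to pass from semisimplicity of endomorphism algebras to complete reducibility. The only slip is cosmetic: $\End_{\Repo H}(V)$ for general $V$ is a finite product of matrix algebras over $\ku$ rather than of copies of $\ku$, since a summand of nonzero quantum dimension may repeat; this does not affect semisimplicity or the conclusion.
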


By Remark \ref{obs:harada}, `completely reducible' becomes what is called semisimple in the recent literature.

\begin{proof}
The crucial step is to show that $\Repo H$ is semisimple.

\begin{step}\label{step:baw-ss} \cite[3.7]{BaW-adv}
The algebra $\End_{\Repo H} (X)$ is semisimple for any  $X$ in $\Repo H$.
\end{step}

Indeed, the Jacobson radical $J$ of $\End_{H}(X)$ is contained in $\J(X,X)$. For, if $\vartheta \in J$, then $\vartheta\omega$ is nilpotent, hence $\tr_L(\vartheta) = \tr_V(\vartheta\omega) = 0$.

\smallbreak
As a consequence of Step \ref{step:baw-ss} and Remark \ref{obs:harada},  $X\in \Repo H$ is minimal if and only if $\End_{\Repo H}(X) \simeq \ku$, that is if and only if it is simple. Also, $\Hom_{\Repo H} (S, T)  = 0$, for $S,T\in \Repo H$ simple non-isomorphic.

\begin{step}\label{le:caracterizacion de los simples} $V$ is a simple object in $\Repo H$ iff there exists $W\in\Rep H$ indecomposable with $\qdim W\neq0$ which is isomorphic to $V$ in $\Repo H$.
\end{step}

Assume that $W\in\Rep H$ is indecomposable. If $f\in\End_H(W)$, then $f$ is either bijective or nilpotent by the Fitting Lemma. If also $\qdim W\neq0$, then $\End_{\Repo H}(W)$ is a finite dimensional division algebra over $\ku$, necessarily isomorphic to $\ku$.
Now, assume that $V$ is a simple object in $\Repo H$. Let $\pi\in\End_H(V)$ be a lifting of $\id_V=1\in\End_{\Repo H}(V)\simeq\ku$.
We can choose $\pi$ to be a primitive idempotent. Then the image $W$ of $\pi$ is indecomposable and $\pi_{|W}$ induces an
isomorphism between $W$ and $V$ in $\Repo H$. Again by the Fitting Lemma, $\End_H(W)\simeq\ku\pi_{|W}\oplus \Rad \End_H(W)$.
Hence $\qdim W\neq0$ since $\pi$ is a lifting of $\id_V\in\End_{\Repo H}(V)$.

\begin{step}\label{step:harada-ss-complred} Let $V, W\in\Rep H$ indecomposable with $\qdim V\neq0$, $\qdim W\neq0$,  which are isomorphic in $\Repo H$.
Then $V\simeq W$ in $\Rep H$.
\end{step}

Let $f\in \Hom_H(V, W)$ and $g\in \Hom_H(W, V)$ such that $gf = \id_V$ in $\Repo H$; that is
\begin{align}\label{eq:step3-aux}
\tr_V (\vartheta(gf - \id) \omega) &= 0 & \text{for every }\vartheta\in \End_H(V).
\end{align}
Since $V$ is indecomposable, $gf$ is invertible in $\End(V)$, or otherwise \eqref{eq:step3-aux} would fail for $\vartheta = \id$.
Thus $g$ is surjective and $f$ is injective. But $W$ is also indecomposable, hence $f$ is surjective and $g$ is injective, and both are isomorphisms.

\medbreak
To finish the proof of the statement about the irreducibles, observe that an indecomposable $U\in \Rep H$ with $\qdim U =0$ satisfies $U\simeq 0$ in $\Repo H$. Since any $M\in \Rep H$
is a direct sum of indecomposables, we see that $M$ is isomorphic in $\Repo H$ to a direct sum of indecomposables with non-zero quantum dimension.

\medbreak
Finally observe that the additive $\ku$-linear category $\Repo H$, being isomorphic to a direct sum of copies of $\Vect \ku$, is abelian.
\end{proof}

Here is a consequence of the Theorem: let $\iota: V \hookrightarrow W$ be a proper inclusion of indecomposable $H$-modules with $\qdim V\neq0$, $\qdim W\neq0$. Then
$\iota \in \J(V,W)$.

\begin{Rem}\label{rem:GK} From Theorem \ref{prop:repoH-ss} we see  the relation between the constructions of \cite{BaW-adv} and \cite{GK-inv}. For, let $\C = \Rep H$ and let $\C^0$, resp. $\C^\perp$, be the full subcategory whose objects are direct sums of indecomposables with quantum dimension $\neq 0$, resp. 0. Then $\Repo H$ is the quotient of $\C$ by $\C^\perp$ as described in \cite[Section 1]{GK-inv}.
\end{Rem}

Even when $H$ is \fd, $\Irr\Repo H$ is not necessarily finite. It is then natural to look at suitable subcategories of $\Rep H$ that give rise to finite tensor subcategories of $\Repo H$. A possibility is tilting modules, that proved to be very fruitful in the case of quantum groups at roots of one.
We shall discuss this matter in Section \ref{sec:titling2}.

\subsection{Pointed or copointed pivotal Hopf algebras}\label{subsec:pointed-gral}

\subsubsection{} Let $H$ be a pointed Hopf algebra and set $G = G(H)$. We assume that $H$ is generated by group-like and skew-primitive elements. For $H$ finite-dimensional, it was conjectured
that this is always the case \cite[Conjecture 1.4]{AS-adv}. So far this is true in all known cases, see
\cite{Ang3,AGI} and references therein. As explained in Subsection \ref{subsec:hopf},
there exist $g_1, \dots, g_{\theta}\in G$
and $\rho_i \in \Irr C_G(g)$, $1 \le i \le \theta$, such that the infinitesimal braiding of $H$ is
$$M(\oc_{g_1}, \rho_1) \oplus \dots \oplus M(\oc_{g_\theta}, \rho_\theta).$$

\begin{lema}\label{lema:pointed-omega-square-antipode} Let $\omega \in G$. Then the following are equivalent:
\begin{enumerate}\renewcommand{\theenumi}{\alph{enumi}}   \renewcommand{\labelenumi}{(\theenumi)}
\item $\omega$ is a pivot.

\item $\omega\in Z(G)$ and $\rho_i(\omega) = \rho_i(g_i)^{-1}$,  $1 \le i \le \theta$.
\end{enumerate}
\end{lema}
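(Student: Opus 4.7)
The plan is to test the pivot condition $\Ss^2=\mathrm{ad}_\omega$ (with $\mathrm{ad}_\omega(x):=\omega x\omega^{-1}$) on a convenient set of algebra generators of $H$. Since $H$ is generated by $G:=G(H)$ together with skew-primitives, and both $\Ss^2$ and $\mathrm{ad}_\omega$ are algebra automorphisms, it suffices to verify the identity separately on these two classes. On group-likes, $\Ss^2\vert_G=\id_G$, so the identity $g=\omega g\omega^{-1}$ holds for every $g\in G$ iff $\omega\in Z(G)$; I therefore assume $\omega\in Z(G)$ throughout.

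For the skew-primitive part I would first replace the original generating set by a cleaner one. The conjugation action of $G$ preserves $H_0\subset H_1$, so because $\mathrm{char}\,\ku=0$ and $G$ is finite, one can pick a $G$-stable complement $P$ with $H_1=H_0\oplus P$; the canonical projection is then an isomorphism of Yetter--Drinfeld modules $P\cong V$. The elements of $P$ are skew-primitive, and $G\cup P$ still generates $H$ as an algebra (the skew-primitives living inside $H_0$ are scalar multiples of the elements $1-g\in\ku G$). Thus it remains to compare $\Ss^2$ and $\mathrm{ad}_\omega$ on $P$. For $v\in V_g$ with lift $x_v\in P$, the relation $\Delta(x_v)=x_v\ot 1+g\ot x_v$ gives $\Ss(x_v)=-g^{-1}x_v$ and hence $\Ss^2(x_v)=g^{-1}x_vg$. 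Read through the $G$-equivariant isomorphism $P\cong V$, this identifies $\Ss^2\vert_P$ with the YD action of $g^{-1}$ on each $V_g$, while $\mathrm{ad}_\omega\vert_P$ is the YD action of $\omega$.

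To finish I would decompose $V=\bigoplus_i M(\oc_{g_i},\rho_i)$ and apply \eqref{yd-braiding} to $x_j w\in M(\oc_{g_i},\rho_i)$, where $g_j=x_jg_ix_j^{-1}$. Centrality of $\omega$ gives $\omega x_j=x_j\omega$, hence $\omega\cdot(x_jw)=x_j\rho_i(\omega)(w)$; and $g_j^{-1}x_j=x_jg_i^{-1}$ gives $g_j^{-1}\cdot(x_jw)=x_j\rho_i(g_i^{-1})(w)$. Since $\omega,g_i\in Z(C_G(g_i))$ and $\rho_i$ is irreducible, both $\rho_i(\omega)$ and $\rho_i(g_i)$ are scalars by Schur's lemma, and equality of the two actions reduces to $\rho_i(\omega)=\rho_i(g_i)^{-1}$ for $i=1,\dots,\theta$, which is condition (b).

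The main obstacle I anticipate is the passage from an identity in the quotient $V=H_1/H_0$ (where the representation-theoretic description via the $\rho_i$ is transparent) to a genuine identity in $H$ itself. Resolving this by choosing a $G$-stable lift $P$ of $V$ inside $H_1$, and then invoking the hypothesis that $H$ is generated by group-likes and skew-primitives, is precisely the step where the structural assumption on $H$ is really used.
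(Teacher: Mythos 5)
Your proof is correct and takes essentially the same route as the paper: test the identity $\Ss^2=\mathrm{ad}_\omega$ on $G$ and on skew-primitive generators, use $\Ss^2(x)=g^{-1}xg$ for a $(g,1)$-skew-primitive $x$, and reduce the comparison with $\mathrm{ad}_\omega$ to the scalar condition $\rho_i(\omega)=\rho_i(g_i)^{-1}$ via Schur's lemma. The only difference is that you make explicit two steps the paper leaves implicit — choosing a $G$-equivariant skew-primitive lift $P\cong V$ inside $H_1$, and checking the identity on every graded piece $x_j\otimes V$ of $M(\oc_{g_i},\rho_i)$, not just the $g_i$-graded one.
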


\pf
There exist $x_1, \dots, x_{\theta} \in H$ such that $\Delta(x_i) = x_i \ot 1 + g_i\ot x_i$, $1 \le i \le \theta$, and $H$ is generated by $x_1, \dots, x_{\theta}$ and $G$ as an algebra. Now $\Ss^2(x_i) = g_i^{-1}x_ig_i = \rho_i(g_i^{-1}) x_i$.
\epf

\subsubsection{}
Let $G$ be a finite group and $\delta_g$ be the characteristic function of the subset $\{g\}$ of $G$. If $M\in{}_{\ku^G}\M$, then $M=\oplus_{g\in\Supp M}M[g]$ where
$$
M[g]:=\delta_g\cdot M\,\mbox{ and }\,\Supp M:=\{g\in G:M[g]\neq0\}.
$$

\begin{lema}\label{lema:copointed-omega-square-antipode} Let $H$ be a \fd{} copointed Hopf algebra over $\ku^G$ and $\omega=\sum_{g\in G}\omega(g)\delta_g \in G(\ku^G)$. The following are equivalent:
\begin{enumerate}\renewcommand{\theenumi}{\alph{enumi}}   \renewcommand{\labelenumi}{(\theenumi)}
\item $\omega$ is a pivot.

\item $\cS^2(x) = \omega(g)x$ for all $x\in H[g]$, $g\in G$.
\end{enumerate}
\end{lema}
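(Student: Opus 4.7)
\emph{Proof plan.}

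The plan is to reduce both implications to the single identity
\begin{equation}\label{eq:copointed-conj}
\omega\, x\, \omega^{-1} \;=\; \omega(g)\, x, \qquad x \in H[g].
\end{equation}
Granted \eqref{eq:copointed-conj}, the equivalence is immediate: since the inclusion $\ku^G\hookrightarrow H$ carries $G(\ku^G)$ into $G(H)$, the element $\omega$ is group-like in $H$ with $\omega^{-1}=\cS(\omega)$, and the pivot condition \eqref{eq:omega-square-antipode} reads $\cS^2(x)=\omega x \omega^{-1}$ for all $x\in H$. Decomposing $H=\bigoplus_g H[g]$ and applying \eqref{eq:copointed-conj} componentwise converts this condition into precisely (b), and conversely.

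To prove \eqref{eq:copointed-conj} I would interpret the decomposition $H=\bigoplus_g H[g]$ implicit in the statement as the isotypic decomposition of $H$ under the \emph{left adjoint action} of the sub-Hopf algebra $\ku^G\subset H$ on $H$,
\[
f\cdot x \;:=\; f\_{1}\, x\, \cS(f\_{2}), \qquad f\in\ku^G,\ x\in H,
\]
which is a standard algebra action. Since $\{\delta_g\}_{g\in G}$ is a complete system of orthogonal idempotents of $\ku^G$, the operators $\delta_g\cdot(-)$ are orthogonal idempotents in $\End H$ summing to $\id_H$; consequently $H[g]=\delta_g\cdot H$ gives $H=\bigoplus_g H[g]$, and $\delta_h\cdot x = \delta_{h,g}\, x$ for $x\in H[g]$. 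On a group-like element the adjoint action is just conjugation, $\omega\cdot x = \omega\_{1} x\, \cS(\omega\_{2}) = \omega x \omega^{-1}$. Combining these observations, for $x\in H[g]$,
\[
\omega x \omega^{-1} \;=\; \omega\cdot x \;=\; \sum_h \omega(h)\,(\delta_h\cdot x) \;=\; \omega(g)\, x,
\]
which is \eqref{eq:copointed-conj}.

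The only non-formal ingredient is the identification of $H[g]$ with the $\delta_g$-isotypic piece of $H$ under the adjoint action — and this is also the single place where the copointed structure enters, via the Hopf subalgebra $\ku^G\subset H$. Neither finite-dimensionality of $H$ nor any generation hypothesis is needed beyond $H_0=\ku^G$. After the identification is pinned down, each of (a)$\Rightarrow$(b) and (b)$\Rightarrow$(a) is a one-line computation from \eqref{eq:copointed-conj}.
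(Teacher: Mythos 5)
Your proof is correct and follows essentially the same route as the paper's: both interpret $H[g]$ as the $\delta_g$-isotypic piece of $H$ under the adjoint action of $\ku^G$ and reduce the equivalence to $\omega x\omega^{-1}=\omega(g)x$ for $x\in H[g]$. The paper gets this from the explicit commutation $\delta_t x = x\delta_{g^{-1}t}$ (citing \cite{AV}), while you derive the same identity abstractly from the fact that $\{\delta_g\cdot(-)\}$ are orthogonal idempotent projections and a group-like acts by conjugation under the adjoint action — which also makes your observation transparent that only the Hopf subalgebra inclusion $\ku^G\subseteq H$ is used, not the full copointed hypothesis.
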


\begin{proof} Consider $H$ as a $\ku^G$-module via the adjoint action; then $\delta_tx=x\delta_{g^{-1}t}$ for  $x\in H[g]$, $g, t\in G$ \cite[3.1 (b)]{AV}. Hence  $\omega x\omega^{-1}=\omega(g)x$.
\end{proof}

\subsection{Spherical Hopf algebras with involutory pivot}\label{sec:inv-pivot}

There are many examples of Hopf algebras with involutory pivot.

\subsubsection{}
Let $H$ be a \fd{} pointed Hopf algebra with $G(H)$ abelian. Then its infinitesimal braiding $V$ is a braided vector space
of diagonal type with matrix $(q_{ij})_{1\le i,j \le \theta}$, $\theta = \dim V$. Assume that $q_{ii} = -1$, $1\le i \le \theta$. The list of all braided vector spaces with
this property and such that the associated Nichols algebra is \fd{} can be easily extracted from the main result of \cite{He-adv}.
We apply  Lemma \ref{lema:pointed-omega-square-antipode} because $H$ is generated by group-like and skew-primitive elements \cite{Ang3}.  Hence,
if $V$ belongs to this list, then $H$ is a spherical Hopf algebra with involutory pivot, eventually adjoining a group-like if necessary.
The argument also works when $G(H)$ is not abelian but the infinitesimal braiding is a direct sum of one-dimensional Yetter-Drinfeld modules (one often says that the infinitesimal braiding \emph{comes from the abelian case}).

\subsubsection{} Let $H$ be a \fd{} pointed Hopf algebra with $G(H)$ not abelian and such that the infinitesimal braiding does not come from the abelian case.
In all the examples of such infinitesimal braidings that are known\footnote{The list of all known examples is in \texttt{http://mate.dm.uba.ar/\textasciitilde matiasg/zoo.html}, see also \cite[Table 1]{GHV}, except for one example discovered later \cite[Proposition 36]{HLV}.},
we may apply  Lemma \ref{lema:pointed-omega-square-antipode} because $H$ is generated by group-like and skew-primitive elements.
Also, in all examples except one in \cite{HLV}, the scalar in Lemma \ref{lema:pointed-omega-square-antipode} (b) is -1. Thus $H$ is a spherical Hopf algebra with involutory pivot, eventually adjoining a group-like if necessary.

\subsubsection{} Let $H$ be a \fd{} copointed Hopf algebra over $\ku^G$, with $G$ not abelian. Lemma \ref{lema:copointed-omega-square-antipode}
makes it easy to check whether $H$ has an involutive pivot. For instance, the Hopf algebras $\cA_{[\ba]}$, for $\ba\in\gA_3$,
introduced in \cite{AV}, have an involutory pivot. These Hopf algebras are liftings of
$\toba(V_3)\#\ku^{\Sn_3}$, so they have dimension 72; but they are not quasi-triangular
\cite{AV}.

Also, $\Irr\Repo\cA_{[0]}$ is infinite. Namely, $\cA_{[0]} \simeq
\toba(V_3)\#\ku^{\Sn_3}$ and for $\lambda\in\ku$ we define the $\cA_{[0]}$-module
$M_\lambda=\langle m_{g}:e\neq g\in\Sn_3\rangle$ by
\begin{align*}
\delta_h\cdot m_g=\delta_{h}(g)m_g\quad\mbox{and}\quad\xij{ij}\cdot m_g=\begin{cases}
                                                     0 &\mbox{if }\sgn g=-1,\\
                                                     \lambda_{(ij),g}\,m_{(ij)g}&\mbox{if }\sgn g=1,
                                                     \end{cases}
\end{align*}
where $\lambda_{(ij),g}=1$ except to $\lambda_{(12),(123)}=\lambda$. Then $M\in\Indesc_q\cA_{[0]}$ with $\qdim M_{\lambda}=-1$ for all $\lambda\in\ku$ and $M_\lambda\simeq M_\mu$ iff $\lambda=\mu$. Analogously, we can define $N_\lambda\in\Indesc_q\cA_{[0]}$ with basis $\langle n_{g}:(12)\neq g\in\Sn_3\rangle$, $\qdim N_{\lambda}=1$ and which are mutually not isomorphic.

\begin{Rem} The dual of $\cA_{[0]}$ is $\toba(V_3)\#\ku\Sn_3$, which is not pivotal because $\Sn_3$ is centerless. Compare with the main result of \cite{p}, where it is shown that the dual of a semisimple spherical Hopf algebra is again spherical.
In fact, the dual of $\cA_{[\ba]}$ is not pivotal for any $\ba\in\gA_3$. If $\ba$ is generic, then $(\cA_{[\ba]})^*$ has no non-trivial group-likes \cite[Theorem 1]{AV2}.
If $\ba$ is sub-generic, then the unique non-trivial group-like $\zeta_{(12)}$ of $(\cA_{[\ba]})^*$ is not a pivot, see \cite[Lemma 8]{AV2} for notations.
Namely, if $g\neq(12),e$, then
\begin{align*}
\zeta_{(12)}\rightharpoonup\delta_g\leftharpoonup\zeta_{(12)}=\sum_{t,s\in\Sn_3}\delta_{s}((12))\,\delta_{s^{-1}t}\,\delta_{t^{-1}g}((12))=\delta_{(12)g(12)}\neq\cS^2(\delta_g).
\end{align*}
\end{Rem}

\subsubsection{}
Let $H$ be a spherical Hopf algebra with involutory pivot $\omega$. Then

\begin{itemize}
  \item The quantum dimensions are integers.
  \item If $\chi$ is a representation of dimension one, then $\qdim \ku_{\chi} = \chi(\omega)$.
  \item If $H$ is not semisimple, then at least one module has negative quantum dimension.
\item Assume that there exists $L\in \Irr H$ such that $\qdim L' > 0$ for all $L'\in \Irr H$, $L'\neq L$. Then $\qdim L < 0$.
\end{itemize}

\begin{prop}\label{prop:spherical-involutory}
Let $\C$ be a fusion subcategory of $\Repo H$, where $H$  is a spherical Hopf algebra with involutory pivot. Then there exists a semisimple quasi-Hopf algebra $K$
such that $\C \simeq \Rep K$ as fusion categories.
\end{prop}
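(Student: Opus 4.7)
The plan is to invoke the Etingof--Nikshych--Ostrik characterization of those fusion categories equivalent to $\Rep K$ for a \fd{} semisimple quasi-Hopf algebra $K$: these are precisely the \emph{integral} ones, i.e., those for which $\operatorname{FPdim}(X)\in\mathbb{Z}$ for every simple $X$. So the whole task reduces to showing that $\C$ is integral.

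First, any simple $X\in\C$ corresponds, by Theorem \ref{prop:repoH-ss}, to an indecomposable $H$-module $L_X$ with $\qdim L_X\neq 0$. Since $\omega^{2}=1$, the pivot acts as an involution on $L_X$, producing an eigenspace decomposition $L_X=L_X^{+}\oplus L_X^{-}$, whence
\begin{align*}
\qdim X=\tr_{L_X}(\omega)=\dim_{\ku}L_X^{+}-\dim_{\ku}L_X^{-}\in\mathbb{Z}\setminus\{0\},
\end{align*}
as already recorded in the bullet list preceding the statement. In particular, the global categorical dimension $\dim\C=\sum_{X\in\Irr\C}(\qdim X)^{2}$ is a positive integer.

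Next I would apply the ENO result that a fusion category whose global categorical dimension is rational is automatically \emph{pseudo-unitary}, i.e.\ $\dim\C=\operatorname{FPdim}(\C)$ and $\operatorname{FPdim}(X)=|\qdim X|$ for every simple $X$. Combined with the previous step this forces $\operatorname{FPdim}(X)\in\mathbb{Z}_{>0}$ for all simple $X$, so $\C$ is integral; the desired semisimple quasi-Hopf algebra $K$ with $\C\simeq\Rep K$ is then produced by the ENO classification cited at the outset.

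The main obstacle is the pseudo-unitarity step: moving from integrality of the categorical global dimension to equality with the Frobenius--Perron global dimension. This rests on the non-trivial ENO fact that $\operatorname{FPdim}(\C)/\dim\C$ is a totally positive algebraic integer $\geq 1$ which, together with Galois invariance of the Grothendieck ring and of $\operatorname{FPdim}$, must equal $1$ whenever $\dim\C$ is rational. Everything else -- translating the pivotal structure into an involution on modules and extracting integer dimensions, then invoking the ENO characterization -- is formal.
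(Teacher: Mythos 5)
Your overall strategy coincides with the paper's: reduce the problem to showing that $\C$ is \emph{integral} in the Frobenius--Perron sense and then invoke the Etingof--Nikshych--Ostrik realization of integral fusion categories as $\Rep K$ for a semisimple quasi-Hopf algebra $K$ ([ENO, Thm.~8.33]). Where you diverge is in \emph{how} integrality is obtained, and the two routes are genuinely different. The paper argues \emph{pointwise}: $\qdim X\in\mathbb{Z}$ because $\omega^{2}=1$, and $\qdim X>0$ by the positivity result for spherical fusion categories (citing [ENO, Cor.~2.10], using that a full rigid subcategory of a spherical category is spherical, Remark~\ref{obs:spherical-subcat}); since $\qdim$ is then a character of the Grothendieck ring taking strictly positive values, Perron--Frobenius uniqueness (as in the proof of [ENO, Prop.~8.23]) forces $\qdim=\operatorname{FPdim}$, giving integrality directly. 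You instead argue \emph{globally}: from the involutory pivot you retain only that $\dim\C=\sum(\qdim X)^{2}$ is a positive integer, and then you try to pass to $\operatorname{FPdim}$ via pseudo-unitarity. That route avoids the pointwise positivity citation, which is a plus. The cost, as you yourself note, is that ``rational global dimension $\Rightarrow$ pseudo-unitary'' is not a verbatim ENO statement -- [ENO, Prop.~8.24] has $\operatorname{FPdim}\C\in\mathbb{Z}$ as its hypothesis, not $\dim\C\in\mathbb{Z}$ -- so the Galois bookkeeping you sketch (via $\operatorname{FPdim}\C/\dim\C$ being an algebraic integer with controlled conjugates) needs to be carried out or pinned down with a precise reference before this counts as a complete proof. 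Modulo that, both arguments deliver the same conclusion; yours trades the paper's positivity-of-$\qdim$ citation for a Galois argument on the global dimension.
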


\pf The quantum dimensions are integers, because the pivot is involutory, and positive by \cite[Corollary 2.10]{ENO}; here we use that $\C$ is spherical, see Remark \ref{obs:spherical-subcat}. Then \cite[Theorem 8.33]{ENO} applies. Indeed, the Perron-Frobenius and quantum dimensions here coincide, see e.~g. the proof of \cite[Proposition 8.23]{ENO}.
\epf

We are inclined to believe, because of some computations in  examples, that the quasi-Hopf algebra $K$ in the statement is actually a Hopf algebra quotient of $H$.

\subsection{Ribbon Hopf algebras}\label{subsec:ribbon}
This is a distinguished class of spherical Hopf algebras.
Let  $(H, \Rc)$ be a quasi-triangular Hopf algebra \cite{Dr1}.
We denote the universal matrix as $\Rc = \Rc\^{1} \ot \Rc\^{2}$.
The \emph{Drinfeld element} is
\begin{equation}\label{eqn:drinfeldelement}
\ub = \Ss(\Rc\^{2})\Rc\^{1}.
\end{equation}
Let $Q = \Rc_{21}\Rc$.  The Drinfeld element is invertible and satisfies
\begin{align}\label{eq:drinfeld-element1}
\Delta(\ub) = Q^{-1}(\ub\ot \ub) &= (\ub\ot \ub) Q^{-1},  & \ub^{-1} &=  \Rc\^{2}\Ss^2(\Rc\^{1}),
\\\label{eq:drinfeld-element2}
g :=\ub\Ss(\ub)^{-1} &= \Ss(\ub)^{-1}\ub\in G(H), & \ub\Ss(\ub) &\in Z(H),
\\\label{eq:drinfeld-element3}
\Ss^2(h) &= \ub h\ub^{-1},  & \Ss^4(h) &= g h g^{-1},
\end{align}
for any $h\in H$.

\begin{definition}\label{defi:ribbon}\cite{RT-cmp}
A quasi-triangular Hopf algebra $(H, \Rc)$ is \emph{ribbon} if there exists $\vb\in Z(H)$, called
the \emph{ribbon element}, such that
\begin{align}\label{eq:ribbon-element1}
\vb^2 &= \ub\Ss(\ub),  & \Ss(\vb) &= \vb, & \Delta(\vb) &= Q^{-1}(\vb\ot \vb).
\end{align}
The ribbon element is not unique but it is determined up to multiplication by an element in $\{g\in G(H) \cap Z(H): g^2 = 1\}$.
\end{definition}

Let $H$ be a ribbon Hopf algebra. It follows easily that $\omega = \ub\vb^{-1}\in G(H)$  and $\Ss^2(h) = \omega h \omega^{-1}$  for all $h\in H$; that is, $\omega$ is a pivot. Actually, $H$ is spherical \cite[Example 3.2]{BaW-adv}. In fact, the concept of {\it quantum trace}
is defined in any ribbon category  using the braiding, see for example \cite[XIV.4.1]{ka}. By \cite[XIV.6.4]{ka}, the {\it quantum trace} of $\Rep H$ coincides with $\tr_L$, cf. \eqref{eq:tr-sph-Ha}. Moreover, \cite[XIV.4.2 (c)]{ka} asserts that $\tr_L = \tr_R$.

\bigbreak
There are quasi-triangular Hopf algebras that are not ribbon, but the failure is not difficult to remedy by adjoining a group-like element \cite[Theorem 3.4]{RT-cmp}. Namely, given a quasi-triangular Hopf algebra $(H, \Rc)$, let $\wH = H \oplus H\vb$, where $\vb$ is a formal element not in $H$. Then $\wH$ is a Hopf algebra with product, coproduct, antipode and counit defined for $x,x',y,y' \in H$ by
\begin{align}\label{eq:ribbon-ext-producto}
(x + y\vb) \cdot (x' + y' \vb) &= (xx' + yy' \ub\Ss(\ub)) + (xy' + yx') \vb,
\\\label{eq:ribbon-ext-coproducto}
\Delta(x + y\vb) &= \Delta(x) + \Delta(y)Q^{-1}(\vb\ot \vb),
\\\label{eq:ribbon-ext-antipoda}
\Ss(x + y\vb) &= \Ss(x) + \Ss(y)\vb, \qquad  \cou(x + y\vb) = \cou(x) + \cou(y).
\end{align}
Clearly, $H$ becomes a Hopf subalgebra of $\wH$; it can be shown then that $\Rc$ is a universal $R$-matrix for $\wH$ and that $\vb$ is a ribbon element for
$(\wH,\Rc)$. See \cite[Theorem 3.4]{RT-cmp}. We shall say that $\wH$ is the \emph{ribbon extension} of $(H, \Rc)$.

\begin{Rem}\label{rem:ribbon-ext} (Y. Sommerh\"auser, private communication). The ribbon extension fits into an exact sequence  of Hopf algebras
$H \hookrightarrow\wH \twoheadrightarrow \ku [\Z/2]$, which is cleft. Namely, let $\xi$ be the generator of $\Z/2$ and define
\begin{align*}
\xi \rightharpoonup x &= \Ss^2(x), & x&\in H,& \sigma(\xi^i \ot \xi^j)&= g^{ij},& i,j&\in \{0,1\}.
\end{align*}
Then the crossed product defined by this action and cocycle together with the tensor product of coalgebras is a Hopf algebra $H \#_{\sigma}\ku[\Z/2]$, see for instance \cite{AD}. Now the map $\psi: H \#_{\sigma}\ku[\Z/2] \to \wH$, $\psi(x\# \xi^i) = x (\Ss(u^{-1})\vb)^i$, is an isomorphism of Hopf algebras.
\end{Rem}

In conclusion, any \fd{} Hopf algebra $H$ gives rise to a ribbon Hopf algebra, namely the ribbon extension of its Drinfeld double:
$$
\xymatrix{H \ar@{~>}[0,1]  & D(H) \ar@{~>}[0,1] & \widetilde{D(H)}.}
$$

\begin{Rem}\label{rem:ribbon-double} A natural question is whether the Drinfeld double itself is ribbon; this was addressed in \cite{KR}, where the following results were obtained. Let $H$ be a \fd{} Hopf algebra, $g\in G(H)$ and $\alpha\in G(H^*)$ be the distinguished group-likes\footnote{These control the passage from left to right integrals.}. The celebrated Radford's formula for the fourth power of the antipode \cite{radford-ajm} states that
\begin{align}\label{eq:radford-S4}
\Ss^4(h) &= g (\alpha \rightharpoonup h\leftharpoonup \alpha^{-1})g^{-1},  & h &\in H.
\end{align}
Here $\rightharpoonup $, $\leftharpoonup$ are the transposes of the regular actions.

\begin{enumerate}\renewcommand{\theenumi}{\alph{enumi}}   \renewcommand{\labelenumi}{(\theenumi)}
\item \cite[Theorem 2]{KR} Suppose that $(H, \Rc)$ is quasi-triangular and that $G(H)$ has odd order. Then $(H, \Rc)$ admits a (necessarily unique)
ribbon element if and only if $\Ss^2$ has odd order.

\bigbreak
\item \cite[Theorem 3]{KR}  $(D(H), \Rc)$ admits a ribbon element if and only if there exist $\ell\in G(H)$ and $\beta\in G(H^*)$ such that
\begin{align}\label{eq:kauffman-radford-S2}
\ell^2 &= g, & \beta^2 &= \alpha, & \Ss^2(h) &= \ell (\beta \rightharpoonup h\leftharpoonup \beta^{-1})\ell^{-1},  & h &\in H.
\end{align}
\end{enumerate}
\end{Rem}

\bigbreak
\subsection{Cospherical Hopf algebras}

It is natural to look at the notions that insure that the category of comodules of a Hopf algebra is pivotal or spherical. This was done in \cite{Bi, o}.

\begin{Def}\label{def:cospherical-ha}  A \emph{cospherical} Hopf algebra is a pair $(H, t)$, where $H$ is a Hopf algebra and  $t\in \Alg(H, \ku)$ is such that
\begin{align}\label{eq:copivot-square-antipode}
\Ss^2(x\_1)t(x\_2)&= t(x\_1)x\_2,  & x&\in H, \\
\label{eq:copivot-traza}
\tr_V((\id_V\otimes t)\rho_V \vartheta) &= tr_V((\id_V\otimes t^{-1})\rho_V \vartheta),  & \vartheta &\in \End_H(V), &&
\end{align}
for all $V\in \Corep H$.
We  say that $t\in \Alg(H, \ku)$ is a \emph{copivot} when it satisfies \eqref{eq:copivot-square-antipode}; pairs $(H, t)$ with $t$ a
copivot are called \emph{copivotal Hopf algebras}.
A \emph{cospherical element} is a copivot that fulfills \eqref{eq:copivot-traza}. Let $H$ be a cospherical Hopf algebra. Then the category $\Corep H$ is spherical; in fact, the left and right traces are given by the sides of \eqref{eq:copivot-traza}.
\end{Def}

The set $\Alg(H, \ku)$ is a subgroup of the group $\Hom_{\star}(H, \ku)$ of convolution-invertible linear functionals,
which in turn acts on $\End (H)$  on both sides.
Hence \eqref{eq:copivot-square-antipode} can be written as $\Ss^2 * t=t* id_H$ or else as $\Ss^2=t*id_H*t^{-1}$.
The copivot is not unique but it is determined up to multiplication by an element in $\Alg(H, \ku)$ that centralizes $\id_H$.
The antipode of a copivotal Hopf algebra is bijective, with inverse given by $\Ss^{-1}(x)=\sum t^{-1}(x\_1)\Ss(x\_2)t(x\_3)$, $x\in H$.

\smallbreak
The following statement is proved exactly as Proposition \ref{prop:reduccion a irreducibles}.

\begin{prop}\label{prop:reduccion a coirreducibles}
A copivotal Hopf algebra $(H,t)$ is cospherical if and only if
\eqref{eq:copivot-traza} holds for all simple $H$-comodules. \qed
\end{prop}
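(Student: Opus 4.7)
The plan is to mimic the proof of Proposition \ref{prop:reduccion a irreducibles} step by step, with the role of ``multiplication by $\omega$'' now played by the linear endomorphism $T_V := (\id_V\ot t)\rho_V$ of $V$ (and similarly $T_V' := (\id_V\ot t^{-1})\rho_V$). The nontrivial direction requires reducing from arbitrary $V\in\Corep H$ first to cosemisimple comodules, and then to simple ones, in two claims parallel to those in the proof of Proposition \ref{prop:reduccion a irreducibles}.

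The key observation throughout is that any subcomodule $W\subseteq V$ satisfies $\rho_V(W)\subseteq W\ot H$, and hence $T_V(W)\subseteq W$ and $T_V'(W)\subseteq W$; moreover, $T_V|_W = T_W$ and analogously for $T_V'$. This is the exact counterpart of the fact that a group-like $\omega$ preserves every submodule. Together with the compatibility of $\rho$ with direct sums, this will let the block-matrix arguments used for modules go through verbatim for comodules.

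\textbf{Claim 1:} If \eqref{eq:copivot-traza} holds for $M_1,M_2\in\Corep H$, then it holds for $M=M_1\oplus M_2$. Given $\vartheta\in\End_H(M)$, decompose $\vartheta=\sum_{1\le i,j\le 2}\vartheta_{ij}$ with $\vartheta_{ij}=\pi_i\circ\vartheta\circ\iota_j\in\Hom_H(M_j,M_i)$. Since $\rho_M = \rho_{M_1}\oplus\rho_{M_2}$, the operator $T_M$ is block-diagonal with blocks $T_{M_i}$, so $T_M\vartheta$ has $(i,j)$-block equal to $T_{M_i}\vartheta_{ij}$. Thus $\tr_M(T_M\vartheta)=\tr_{M_1}(T_{M_1}\vartheta_{11})+\tr_{M_2}(T_{M_2}\vartheta_{22})$, and likewise for $T'_M$. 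The hypothesis applied to each $M_i$ then yields $\tr_M(T_M\vartheta)=\tr_M(T'_M\vartheta)$.

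\textbf{Claim 2:} If \eqref{eq:copivot-traza} holds for every cosemisimple $H$-comodule, then $(H,t)$ is cospherical. Every $M\in\Corep H$ is \fd{}, so its Loewy filtration $M_0\subseteq M_1\subseteq\cdots\subseteq M_k=M$, defined by $M_0=\Soc M$ and $M_{i+1}/M_i=\Soc(M/M_i)$, has cosemisimple successive quotients. Argue by induction on the Loewy length $k$. For $k=0$ the statement is the hypothesis. For $k>0$, put $S=\Soc M$ and $\widetilde M=M/S$; any $f\in\End_H(M)$ preserves the subcomodule $S$ and induces $f_1=f|_S\in\End_H(S)$ and $f_2\in\End_H(\widetilde M)$. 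Choose a basis of $M$ adapted to $S\subseteq M$; since $T_M(S)\subseteq S$ and $T_M$ descends to $T_{\widetilde M}$ on $\widetilde M$, both $[f]$ and $[T_Mf]$ are block upper-triangular with diagonal blocks $[f_i]$ and $[T_{M_i}f_i]$ respectively (where $M_1=S$, $M_2=\widetilde M$). Hence
\begin{align*}
\tr_M(T_Mf) &= \tr_S(T_Sf_1)+\tr_{\widetilde M}(T_{\widetilde M}f_2),
\end{align*}
and similarly for $T'_M$. By the inductive hypothesis on $\widetilde M$ (of Loewy length $k-1$) and by Claim 1 combined with the hypothesis on the cosemisimple $S$, the two sides agree, completing the induction. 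Finally, Claim 1 extends the hypothesis for simple comodules to all cosemisimple ones, so the proposition follows.

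The only point requiring any care is that $T_V$ and $T_V'$ really do restrict to subcomodules and descend to quotients; this is immediate from $\rho(W)\subseteq W\ot H$, so no genuine obstacle arises. The entire argument is a faithful dualization of the module-theoretic proof.
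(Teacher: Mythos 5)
Your proposal is correct and takes essentially the same approach as the paper, which contains no written-out proof but simply states that the result ``is proved exactly as Proposition~\ref{prop:reduccion a irreducibles}.'' Your identification of $T_V=(\id_V\ot t)\rho_V$ as the dual of left multiplication by $\omega$, and the observation that it preserves subcomodules because $\rho_V(W)\subseteq W\ot H$, is precisely the dualization the authors had in mind.
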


\begin{Exs}
\begin{enumerate}\renewcommand{\theenumi}{\alph{enumi}}   \renewcommand{\labelenumi}{(\theenumi)}
\item Assume that $H$ is \fd. Then $H$ is copivotal (resp., cospherical) iff $H^*$ is pivotal (resp., spherical).

\smallbreak\item  Any involutory Hopf algebra is cospherical with $t = \cou$.

\smallbreak
\item  A copivotal Hopf algebra with involutive copivot is cospherical.

\smallbreak
\item  Condition \eqref{eq:copivot-square-antipode} is multiplicative on $x$; also, it holds for $x\in G(H)$.

\smallbreak
\item\label{item:pointed-pivotal} Let $H$ be a pointed Hopf algebra generated as an algebra by $G(H)$ and a family $(x_i)_{i\in I}$, where $x_i$ is $(g_i,1)$
skew-primitive.
Assume that $g_ix_ig_i^{-1} = q_ix_i$, with $q_i\in \ku^{\times} \setminus \{1\}$ for all $i\in I$.
If $t\in \Alg(H, \ku)$, then $t(x_i) = 0$, $i\in I$. Hence $t$
is a  copivot iff $t(g_i) = q_{i}^{-1}$, for all $i\in I$.

\smallbreak
\item Let $H$ be a pointed Hopf algebra as in item \eqref{item:pointed-pivotal} and $t\in \Alg(H, \ku)$ a copivot. Then $H$ is cospherical iff
$t(g) \in \{\pm 1\}$ for all $g\in G(H)$.

\smallbreak
\item
The notion of coribbon Hopf algebra is formally dual to the notion of ribbon Hopf algebra, see \cite{h, lt}. Coribbon Hopf algebras
 are  cospherical. For instance, the quantized function algebra $\Oc_q(G)$ of a semisimple algebraic group is cosemisimple and coribbon,
 when $q$ is not a root of 1.

\end{enumerate}

\end{Exs}

We recall now the contruction of universal copivotal Hopf algebras.

\begin{Def} \cite{Bi}
Let $F\in GL_n(k)$. The  Hopf algebra $H(F)$ is the universal algebra with generators $(u_{ij})_{1\leq i,j\leq n}$, $(v_{ij})_{1\leq i,j\leq n}$  and relations
\begin{align*}
uv^t&=v^tu=1, & vFu^tF^{-1}&= Fu^tF^{-1}v=1.
\end{align*}

The comultiplication is determined by $\Delta (u_{ij})= \sum_k u_{ik}\otimes u_{kj}$, $\Delta (v_{ij})= \sum_k v_{ik}\otimes v_{kj}$
and the antipode by $\Ss(u)=v^t$, $\Ss(v)=Fu^tF^{-1}$. The Hopf algebra $H(F)$ is copivotal, the copivot being $t_F(u)=(F^{-1})^t$ and $t_F(v)=F$.

\smallbreak
Let $H$ be a Hopf algebra provided with $V\in \Corep H$ of dimension $n$ such that $V\cong V^{**}$.
Then there exist a matrix $F\in GL_n(k)$, a coaction $\beta_V:V\to V\otimes H(F)$ and a Hopf algebra morphism $\pi:H(F)\to H$ such that $(\id_V\otimes \pi)\beta_V=\rho_V$.
\end{Def}

In conclusion, we would like to explore semisimple tensor categories arising from cospherical, not cosemisimple, Hopf algebras. The first step is the
dual version of Theorem \ref{prop:repoH-ss}, which is proved exactly in the same way.

\begin{Thm}\label{prop:corepoH-ss}  Let $H$ be a cospherical Hopf algebra.
Then the non-degene\-rate quotient $\Corepo H$ of $\Corep H$ is a completely reducible spherical tensor category, and
$\Irr\Corepo H$ is in bijective correspondence with the set of isomorphism classes of indecomposable
\fd{} $H$-comodules with non-zero quantum dimension. \qed
\end{Thm}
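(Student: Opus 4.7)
The plan is to replicate the four steps in the proof of Theorem \ref{prop:repoH-ss} in the comodule setting, using the cospherical traces from Definition \ref{def:cospherical-ha}. Let $\Theta: \Hom^H(X,Y)\times \Hom^H(Y,X)\to \ku$ be defined by $\Theta(f,g) = \tr_L(fg) = \tr_R(gf)$, with $\tr_L$ and $\tr_R$ given by the two sides of \eqref{eq:copivot-traza}; the cospherical condition guarantees they agree. Let $\cJ(X,Y)$ be the left kernel of $\Theta$ and set $\Corepo H = \Corep H / \cJ$. As in Subsection \ref{subsec:nondeg-quotient} this is an additive non-degenerate spherical category, and the quantum dimensions are unchanged.

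The first step is to establish the analog of Step \ref{step:baw-ss}: for any $X\in \Corep H$, the algebra $\End_{\Corepo H}(X)$ is semisimple, i.e.\ the Jacobson radical $J$ of the finite-dimensional algebra $\End^H(X)$ is contained in $\cJ(X,X)$. The operator $T_X:X\to X$ given by $v\mapsto (\id\otimes t)\rho_V(v)$ is $H$-colinear (this is where \eqref{eq:copivot-square-antipode} enters), so it commutes with every $\vartheta \in \End^H(X)$; hence if $\vartheta$ is nilpotent then so is $\vartheta\, T_X$, giving $\tr_L(\vartheta)=\tr_X(\vartheta\, T_X)=0$. This is precisely the dualization of the argument using the pivot $\omega$. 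As a corollary and by Remark \ref{obs:harada}, a minimal object of $\Corepo H$ is the same as one with endomorphism algebra $\ku$, and non-isomorphic simples have zero Hom-spaces.

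Next I would carry out the analogs of Steps \ref{le:caracterizacion de los simples} and \ref{step:harada-ss-complred}. For the first, Fitting's Lemma applies to the finite-dimensional algebra $\End^H(W)$ when $W$ is indecomposable: every colinear endomorphism is bijective or nilpotent, so if moreover $\qdim W\neq 0$, then nilpotents map to $0$ in $\End_{\Corepo H}(W)\simeq \ku$, and $W$ is simple in $\Corepo H$. Conversely, lifting $\id_V$ to a primitive idempotent $\pi\in \End^H(V)$ produces an indecomposable image $W$ isomorphic to $V$ in $\Corepo H$ with nonzero quantum dimension. For the second, given $f:V\to W$ and $g:W\to V$ with $gf\equiv \id_V$ modulo $\cJ$, the identity $\tr_V(\vartheta(gf-\id)T_V)=0$ for all $\vartheta\in \End^H(V)$, specialized to $\vartheta=\id$, forces $gf$ to be invertible (else all its traces against endomorphisms would vanish, contradicting indecomposability combined with $\qdim V\neq 0$); hence $f,g$ are mutually inverse in $\Corep H$.

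Finally, to finish, I would use Krull--Schmidt in $\Corep H$: every $M\in\Corep H$ is a finite direct sum of indecomposables (since $M$ is contained in the finite-dimensional subcoalgebra coming from its matrix coefficients, reducing to modules over a finite-dimensional algebra), and any indecomposable with zero quantum dimension is zero in $\Corepo H$. Combined with the previous two steps, this shows that $\Irr \Corepo H$ is in bijection with isomorphism classes of indecomposable finite-dimensional $H$-comodules with nonzero quantum dimension, and $\Corepo H$ is completely reducible. I do not expect any genuine obstacle: the proof is formally dual to the one in the text, the only point needing care being the verification that $T_X$ is $H$-colinear and that Fitting/Krull--Schmidt apply, both of which follow from standard finite-dimensional comodule theory.
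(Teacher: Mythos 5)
Your proof is correct and is essentially identical to what the paper has in mind: the paper literally states that Theorem~\ref{prop:corepoH-ss} ``is proved exactly in the same way'' as Theorem~\ref{prop:repoH-ss}, and your four steps are precisely that dualization, with the operator $T_X(v)=(\id\otimes t)\rho_X(v)$ playing the role of multiplication by the pivot $\omega$.

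One inaccuracy in your justification of Step~\ref{step:baw-ss} is worth fixing. You assert that $T_X$ is an $H$-colinear endomorphism of $X$, and that \emph{therefore} it commutes with every $\vartheta\in\End^H(X)$. Neither half of that sentence is right as stated: two colinear endomorphisms of $X$ need not commute, and in fact $T_X$ is generally \emph{not} colinear as a map $X\to X$ --- colinearity would require $v\_{0}\ot v\_{1}\,t(v\_{2}) = v\_{0}\,t(v\_{1})\ot v\_{2}$, i.e.\ $\id_H * t = t * \id_H$, whereas the copivot condition \eqref{eq:copivot-square-antipode} only gives $\Ss^2 * t = t * \id_H$. (What \eqref{eq:copivot-square-antipode} really buys is that $T_X$ is a colinear map $X\to X^{**}$, i.e.\ the pivotal structure; its role here, as in the module case, is to make $\Corep H$ pivotal so that $\tr_L,\tr_R$ and the ideal $\cJ$ make sense.) What the argument actually needs, and what is true, is only that $T_X$ commutes with every \emph{colinear} $\vartheta$, and this follows from colinearity of $\vartheta$, not of $T_X$: since $\rho\vartheta=(\vartheta\ot\id)\rho$ one has $T_X\vartheta(v)=\vartheta(v)\_{0}\,t\big(\vartheta(v)\_{1}\big)=\vartheta(v\_{0})\,t(v\_{1})=\vartheta\big(T_X(v)\big)$. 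This is the exact dual of the observation that every $H$-linear map commutes with the action of the group-like $\omega$. With this corrected, the rest of your argument --- Fitting on the finite-dimensional local algebra $\End^H(W)$, lifting idempotents, Krull--Schmidt via the finite-dimensional subcoalgebra generated by the matrix coefficients --- goes through verbatim.
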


\section{Tilting modules}\label{sec:titling2}

The concept of tilting modules appeared in \cite{BB1} and was extended to quasi-hereditary algebras in \cite{Ri}.
Observe that \fd{} quasi-hereditary Hopf algebras are semisimple. Indeed, quasi-hereditary  algebras have finite global dimension, but a \fd{} Hopf algebra is a Frobenius algebra, hence it has global dimension 0 or infinite. Instead, the context where the recipe
of tilting modules works is a suitable category of modules, or comodules, of an infinite dimensional Hopf algebra.
The relevant examples are: algebraic semisimple groups over an algebraically closed field of positive characteristic
(the representations are comodules over the Hopf algebra of rational functions),
quantum groups at roots of one and the category $\Oc$ over a semisimple Lie algebra \cite{A, AP, Do1, GM, Mat}.
The main features  are:

\smallbreak
$\bullet$ The suitable category of representations is not artinian, and the simple modules are parameterized by dominant weights; the set of dominant weights admits a total order that refines the usual partial order.
To fit into the framework of quasi-hereditary algebras,  subcategories of modules with weights in suitable subsets are considered; this allows to define  Weyl modules $\Delta (\lambda)$, dual Weyl modules $\nabla(\lambda)$, and eventually tilting modules $T(\lambda)$, for $\lambda$ a dominant weight. Usually these constructions are performed in an \emph{ad-hoc} manner, not through quasi-hereditary algebras, albeit those corresponding to this situation are studied in the literature under the name of Schur algebras.

\smallbreak
$\bullet$ The tensor product of two tilting modules and the dual of a tilting module are again tilting. The later statement is trivial, the former requires a delicate proof.

\smallbreak
$\bullet$ There is an \emph{alcove} inside the chamber defined by the positive roots and bounded by an affine hyperplane. If $\lambda$ is in the alcove, then the simple module satisfies $L(\lambda) = \Delta(\lambda)$, hence it is the  tilting $T(\lambda)$. The tilting modules $T(\lambda)$ outside the alcove are projective, hence have zero quantum dimension. Thus, the fusion category looked for is spanned by the tilting modules in the alcove.

\smallbreak
$\bullet$ The fusion rules between the tilting modules is given by the celebrated Verlinde formula \cite{Ve} or a modular version,
see \cite{AP, Mat}.

\smallbreak
We would like to adapt these arguments to categories of representations of certain Hopf algebras $H$ arising from \fd{} Nichols of diagonal type.
The Hopf algebra $H$ would be the Drinfeld double, or a variation thereof, of the bosonization of the corresponding Nichols algebra with a suitable abelian group.
We would like to solve the following points:

\begin{itemize}
\item The set of  irreducible objets in $\Rep H$ (or some appropriate variation) should split as a filtered  union $\Irr H = \bigcup_{A\in \cA} A$; each $A$ spans an artinian subcategory where tilting modules $\Tc_A$ can be computed.

\item Define the category $\Tc_H$ of tilting modules over $H$ as the union of the various $\Tc_A$; this should be a semisimple category.

\item the category  $\Tc_H$ of tilting modules is stable by tensor products and duals.

\item It is possible to determine which irreducible tilting modules have non-zero quantum dimension; there are a finite number of them.

\item The fusion rules are expressed through a variation of the Verlinde formula.
\end{itemize}

Provided that these considerations are  correct, the full subcategory  of $\Repo H$ generated by the indecomposable tilting modules with non-zero quantum dimension, is a fusion category. In this way, we hope to obtain new examples of non-integral fusion categories.

\subsection{Quasi-hereditary algebras and tilting modules}\label{sec-app:tilting}

Tilting modules work for our purpose because they span a completely reducible category already in $\Rep H$. We think it is worthwhile to recall the main definitions of the theory of (partial) tilting modules over quasi-hereditary algebras, due to Ringel \cite{Ri}. A full exposition is available in \cite{Do2}.

Let $A$ be an artin algebra. Consider a family $\Theta = (\Theta(1), \dots, \Theta(n))$ of $A$-modules such that
\begin{align}\label{eq:ext-0}
\Ext^1_A(\Theta(j), \Theta(i)) &= 0, & j&\geq i.
\end{align}
We denote by $\F(\Theta)$  the full subcategory of $\lm{A}$ with objects
$M$ that admit a filtration with sub-factors in $\Theta$.
We fix a numbering (that is, a total order) of $\Irr A$: $L(1), \dots, L(n)$. We set
\begin{align*}
P(i) &= \text{ projective cover of } L(i),
\\ Q(i) &= \text{ injective hull of } L(i),
\\ \Delta(i) &= P(i)/U(i), \quad \text{ where } U(i) = \sum_{j>i}\quad \sum_{\alpha\in \Hom(P(j), P(i))} \Imm \alpha,
\\ \nabla(i) &=  \bigcap_{j>i}\quad \bigcap_{\beta\in \Hom(Q(i), Q(j))} \ker \beta,
\end{align*}
$1\le i \le n$. Let $\Delta = (\Delta(1), \dots, \Delta(n))$, $\nabla = (\nabla(1), \dots, \nabla(n))$; these satisfy \eqref{eq:ext-0} and then \cite[Theorem 1]{Ri} applies to them.

\begin{Def}\label{def:quasi-hereditary} The artin algebra $A$ is \emph{quasi-hereditary} provided that
 $_{A}A \in \F(\Delta)$, and $L(i)$ has multiplicity one in  $\Delta(i)$,  $1 \le i \le n$.
\end{Def}

\begin{Rem}\label{obs:quasi-hereditary-alternative} Quasi-hereditary algebras were introduced by Cline, Parshall and Scott, see e. g. \cite{CPS}. There are some alternative definitions.

\begin{enumerate}\renewcommand{\theenumi}{\alph{enumi}}   \renewcommand{\labelenumi}{(\theenumi)}
\item\label{item:quasi-hereditary-alternative inductive} An ideal $J$ of an artin algebra $A$ is \emph{hereditary} provided that
\begin{itemize}
  \item $J\in \lm{A}$ is projective,
  \item $\Hom_A(J, A/J) = 0$ (Ringel assumes $J^2 = J$ instead of this),
  \item $JNJ =0$, where $N$ is the radical of $A$.
\end{itemize}
It can be shown that $A$ is quasi-hereditary iff there exists a chain of ideals $A = J_0 > J_1 > \dots > J_m =0$
with $J_i/J_{i+1}$ hereditary in $A/J_{i+1}$.

\item Also, $A$ is quasi-hereditary iff $\lm{A}$ is a highest weight category, that is the following holds for all $i$:

\begin{itemize}
\item $Q(i)/ \nabla(i)\in \F(\nabla)$.

\item If $(Q(i)/ \nabla(i): \nabla(j)) \neq 0$, then $j>i$.
\end{itemize}
\end{enumerate}

\end{Rem}

For completeness, we include the definitions of tilting, cotilting and basic modules, see e.g. \cite{Ri} and its bibliography. First, a  module $T$ is tilting provided that
\begin{itemize}
  \item it has finite projective dimension;
  \item $\Ext^i(T, T) =0$ for all $i\ge 1$;
  \item for any projective module $P$, there should exist an exact sequence $0 \to P \to T_0 \to \dots \to T_m \to 0$, with all $T_j$ in the additive subcategory generated by $T$, denoted $\add T$.
\end{itemize}

Second, a cotilting module should have
\begin{itemize}
  \item finite injective dimension;
  \item $\Ext^i(T, T) =0$ for all $i\ge 1$;
  \item for any injective module $I$, there should exist an exact sequence $0 \to T_m \to \dots \to T_0 \to I \to 0$, with all $T_j$  in $\add T$.
\end{itemize}

Lastly, a basic module is one with no direct summands of the form $N\oplus N$, with $N\neq 0$.

\smallbreak
Assume that $A$ is a quasi-hereditary algebra and consider the full subcategory $\Tc = \Tc_A = \F(\Delta) \cap \F(\nabla)$ of (partial) tilting modules.
It was shown in \cite[Theorem 5]{Ri} that-- for a quasi-hereditary algebra-- there is a unique basic module $T$, that is tilting and cotilting (in the sense just above), and such that $\Tc$ coincides with $\add T$. The relation between this $T$ and the partial tilting modules is clarified by the following result.

\begin{Thm}\label{thm:tilting-ringel} \cite[Corollary 2, Proposition 5]{Ri} There exist indecomposable (partial) tilting modules $T(i)\in \Tc$,   $1 \le i \le n$, with the following properties:
\begin{itemize}
	\item Any indecomposable tilting module is isomorphic to one of them.
	\item $T = T(1) \oplus \cdots \oplus T(n)$ is the tilting module  mentioned above.
	\item There are exact sequences
\begin{align*}
&0 \longrightarrow \Delta(i) \overset{\beta(i)}\longrightarrow  T(i)\longrightarrow X(i)\longrightarrow 0,
\\
&0 \longrightarrow Y(i) \longrightarrow T(i)\overset{\gamma(i)}\longrightarrow  \nabla(i)\longrightarrow 0,
\end{align*}
where $X(i) \in \F(\{\Delta(j): j < i\})$, $Y(i) \in \F(\{\nabla(j): j < i\})$, $\beta(i)$ is a left $\F(\nabla)$-approximation and $\gamma(i)$ is a right $\F(\Delta)$-approximation, $1\le i \le n$ (see \cite{Ri} for undefined notions). \qed
\end{itemize}
\end{Thm}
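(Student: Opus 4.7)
The plan is to proceed by induction on $i$, constructing each $T(i)$ as a universal extension and reading off all the stated properties from the construction; the classification of indecomposable tiltings will then follow from a uniqueness argument. Throughout I will rely on two homological staples of quasi-hereditary algebras: $M\in\F(\Delta)$ iff $\Ext^k_A(M,\nabla(j))=0$ for all $k\ge 1$ and all $j$, and dually $M\in\F(\nabla)$ iff $\Ext^k_A(\Delta(j),M)=0$ for all $k\ge 1$ and all $j$. A formal consequence is the Ext-orthogonality $\Ext^k_A(N,M)=0$ whenever $N\in\F(\Delta)$, $M\in\F(\nabla)$, and $k\ge 1$.

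For $i=1$ the order on simples forces $\Delta(1)=L(1)=\nabla(1)$, so set $T(1):=L(1)$; both sequences are trivial. Inductively, assume $T(1),\dots,T(i-1)\in\F(\Delta)\cap\F(\nabla)$ have been built. Put $W:=T(1)\oplus\cdots\oplus T(i-1)$ and form the \emph{universal extension}
\begin{equation*}
0\longrightarrow W^{d}\longrightarrow T(i)\longrightarrow\Delta(i)\longrightarrow 0,
\end{equation*}
where the multiplicity vector $d=(d_1,\dots,d_{i-1})$ is chosen so that the class of this extension corresponds componentwise to a basis of each $\Ext^1_A(\Delta(i),T(j))$, $j<i$. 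Applying $\Ext^*_A(\Delta(k),-)$ to the sequence and using the inductive hypothesis $\Ext^1_A(\Delta(k),W)=0$ (since $W\in\F(\nabla)$), the connecting homomorphism by construction surjects onto the relevant $\Ext^1$-groups, yielding $\Ext^1_A(\Delta(k),T(i))=0$ for every $k$. Hence $T(i)\in\F(\nabla)$, while $T(i)\in\F(\Delta)$ is automatic as an extension of modules in $\F(\Delta)$. This already produces the first exact sequence with $X(i):=W^{d}\in\F(\{\Delta(j):j<i\})$; the second sequence is extracted from the $\nabla$-filtration of $T(i)$, noting that $\nabla(i)$ must be the top factor because $L(i)$ appears in the top of $T(i)$ with multiplicity one, inherited from $\Delta(i)$.

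The approximation properties then drop out formally. Given $f\colon\Delta(i)\to N$ with $N\in\F(\nabla)$, applying $\Hom_A(-,N)$ to the first sequence yields a surjection $\Hom_A(T(i),N)\twoheadrightarrow\Hom_A(\Delta(i),N)$, since the obstruction $\Ext^1_A(X(i),N)$ vanishes by Ext-orthogonality ($X(i)\in\F(\Delta)$, $N\in\F(\nabla)$); dually for $\gamma(i)$. For indecomposability, the top (and socle) of $T(i)$ contains $L(i)$ with multiplicity exactly one, so any non-trivial idempotent decomposition would split off $\Delta(i)$, impossible. For the classification, associate to each indecomposable tilting $T$ the maximal $i$ such that $\Delta(i)$ appears in some $\Delta$-filtration of $T$; the approximation properties of both $T$ and $T(i)$ produce a pair of morphisms whose composite, by Fitting's lemma applied to the indecomposable $T$, is an isomorphism.

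The main obstacle is the simultaneous Ext-vanishing in the inductive step: one must arrange a \emph{single} universal extension so that $\Ext^1_A(\Delta(k),T(i))=0$ for all $k$ at once, not just for $k<i$. For $k\ge i$ the vanishing uses the quasi-hereditary identity $\Delta(i)=P(i)/U(i)$ of Remark \ref{obs:quasi-hereditary-alternative}, which controls Hom and $\Ext^1$ from higher-indexed $\Delta(k)$ into $\Delta(i)$; this is the point at which the highest-weight axioms bite beyond pure Ext-vanishing. Once this step is secured, the remainder is routine homological algebra inside $\F(\Delta)\cap\F(\nabla)$.
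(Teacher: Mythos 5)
The statement you are asked to prove is not actually proven in the paper at all: the \verb|\qed| following the statement signals that the authors are simply citing Ringel, \cite[Corollary 2, Proposition 5]{Ri}, and leaving the proof there. So there is nothing in the paper to compare your argument against; what follows is a comparison with Ringel's actual argument.

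There is a genuine gap at the core of your inductive step. You form the sequence
\[
0\longrightarrow W^{d}\longrightarrow T(i)\longrightarrow\Delta(i)\longrightarrow 0,
\qquad W=T(1)\oplus\cdots\oplus T(i-1),
\]
so $\Delta(i)$ is a \emph{quotient} of $T(i)$, and you then declare this to be the first exact sequence of the theorem with $X(i)=W^{d}$. But the theorem requires $\Delta(i)$ to be a \emph{submodule} of $T(i)$ with $X(i)$ the cokernel. This is not a cosmetic slip: your subsequent $\Ext$ computation does not produce a tilting module. Applying $\Hom_A(\Delta(k),-)$ to your sequence, and using $\Ext^{\ge1}_A(\Delta(k),W)=0$ (valid since $W\in\F(\nabla)$), the long exact sequence collapses to an \emph{isomorphism} $\Ext^1_A(\Delta(k),T(i))\simeq\Ext^1_A(\Delta(k),\Delta(i))$. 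The connecting map $\Hom_A(\Delta(k),\Delta(i))\to\Ext^1_A(\Delta(k),W^{d})$ is indeed surjective, but only because the target is zero; this gives you no vanishing, and in general $\Ext^1_A(\Delta(k),\Delta(i))\neq0$ for $k<i$. So your $T(i)$ does not lie in $\F(\nabla)$ and is not tilting.

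Ringel's construction goes in the other direction and is iterative rather than one-shot: one starts with $M_0=\Delta(i)$ sitting at the \emph{bottom} and repeatedly forms universal extensions of the shape
\[
0\longrightarrow M_{t-1}\longrightarrow M_t\longrightarrow \Delta(j)^{d_t}\longrightarrow 0,
\]
choosing $j$ to be an index with $\Ext^1_A(\Delta(j),M_{t-1})\neq0$ and $d_t$ so that the extension class exhausts a basis of $\Ext^1_A(\Delta(j),M_{t-1})$. Each step kills the selected $\Ext^1$ group (using $\Ext^1_A(\Delta(j),\Delta(j))=0$ from \eqref{eq:ext-0}) without reintroducing obstructions, the indices that can appear strictly decrease so the process terminates, and the end result $T(i)$ sits in $0\to\Delta(i)\to T(i)\to X(i)\to 0$ with $X(i)\in\F(\{\Delta(j):j<i\})$ as required, and satisfies $\Ext^1_A(\Delta(k),T(i))=0$ for all $k$, hence lies in $\F(\Delta)\cap\F(\nabla)$. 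Your choice of extending by previously built tilting modules $T(j)$ rather than by the $\Delta(j)$'s themselves, and on the wrong side, is what derails the argument. The approximation and classification claims you make afterwards are in the right spirit but rest on this broken construction, so they do not stand as written; the indecomposability argument (``any non-trivial idempotent decomposition would split off $\Delta(i)$'') also needs a real proof, which Ringel supplies via the multiplicity-one occurrence of $\Delta(i)$ in the $\Delta$-filtration and a Fitting-lemma argument on $\End_A(T(i))$.
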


\subsection{Induced and produced}\label{subsec:ind-prod}

Let $B \hookrightarrow A$ be an inclusion of algebras. We denote by $\Res_B^A$ the restriction functor from the category $\lm{A}$ to $\lm{B}$.
\subsubsection{}\label{subsub:pro-ind-generalities} The induced and produced modules of $T \in \lm{B}$ are
\begin{align}\label{eq:in-prod}
\Ind_{B}^A T &= A\ot_B T, & \Pro_{B}^A T &= \Hom_B (A, T).
\end{align}
These are equipped with morphisms of $B$-modules $\iota: T \hookrightarrow \Ind_{B}^A T$, given by $\iota (t) = 1\otimes t$ for $t\in T$, and $\pi: \Pro_{B}^A T \twoheadrightarrow  T$, given by $\pi (f) = f(1)$ for $f\in \Hom_B(A,T)$. The following properties are well-known.

\begin{enumerate}\renewcommand{\theenumi}{\alph{enumi}}   \renewcommand{\labelenumi}{(\theenumi)}

\item\label{subsub:ind-univ} $\Hom_B (T, \Res_B^A M) \simeq \Hom_A (\Ind_B^A T, M)$; that is, induction is left adjoint to restriction.

\item\label{subsub:simple-ind-quot}  For every $S\in \Irr A$ there exists $T\in \Irr B$ such that $S$ is a
quotient of $\Ind_{B}^A T$.

\item
\label{subsub:pro-univ} $\Hom_B (\Res_B^A N, T) \simeq \Hom_A (N, \Pro_B^A T)$; that is, production (also called coinduction) is right adjoint to restriction.

\item\label{subsub:simple-pro-subobj} For every $S\in \Irr A$ there exists $T\in \Irr B$ such that $S$ is a
submodule of $\Pro_{B}^A T$.

\end{enumerate}

\subsubsection{}\label{subsub:pro-ind-duality} Assume that $A$ is a finite $B$-module. Then  $\Res_B^A$, $\Ind_B^A$ and $\Pro_B^A$ restrict to functors (denoted by the same name) between the categories $\lmf{A}$ and $\lmf{B}$ of \fd{} modules; \emph{mutatis mutandis}, the preceding points hold in this context.
Assume also that there exists a \emph{contravariant} $\ku$-linear functor $\D: \lmf{A} \to \lmf{A}$ such that $\D (\lmf{B}) \subseteq \lmf{B}$ and admits a quasi-inverse $\Ee: \lmf{A} \to \lmf{A}$, so that $\D$ is an equivalence of categories. It follows at once from the universal properties that
\begin{align}\label{eq:pro-dual-ind}
\Ind_B^A T &\simeq \D (\Pro_B^A \Ee T) \simeq \Ee (\Pro_B^A \D T),
\\\label{eq:ind-dual-pro}
\Pro_B^A T &\simeq \D (\Ind_B^A \Ee T) \simeq \Ee (\Ind_B^A \D T).
\end{align}
Hence $\D (\Ind_B^A T) \simeq \Pro_B^A \D T$, $\D (\Pro_B^A T) \simeq \Ind_B^A \D T$, and so on.

In this setting, consider the following conditions:
\begin{align}
\label{eq:ind-unique}
&\text{For every }S\in \Irr A, \exists \text{ a \emph{unique} } T\in \Irr B \text{ such that }
\Ind_{B}^A T \twoheadrightarrow S.
\\\label{eq:pro-unique}
&\text{For every }S\in \Irr A, \exists \text{ a \emph{unique} } U\in \Irr B \text{ such that } S \hookrightarrow \Pro_{B}^A U.
\\\label{eq:ind-head-simple}
&\text{The head of } \Ind_{B}^A T \text{ is simple for every } T\in \Irr B.
\\\label{eq:pro-socle-simple}
&\text{The socle of } \Pro_{B}^A U \text{ is simple for every } U\in \Irr B.
\end{align}
Then \eqref{eq:ind-unique} $\iff$ \eqref{eq:pro-unique} and \eqref{eq:ind-head-simple} $\iff$ \eqref{eq:pro-socle-simple}. If all these conditions hold, then for any $T\in \Irr B$, there exists a unique $U\in \Irr B$ such that
\begin{align*}
\Ind_B^A T \twoheadrightarrow S \hookrightarrow \Pro_{B}^A U,
\end{align*}
where $S$ is the head of $\Ind_B^A T$ and the socle of $\Pro_{B}^A U$. We set
$U = w_0(T)$.

\subsubsection{}\label{subsub:pro-ind-hopf} Let $K$ be a Hopf subalgebra of a Hopf algebra $H$, with $H$ finite over $K$. Then
\begin{align*}
\Ind_K^H T &\simeq (\Pro_K^H {}^*T)^* \simeq {}^*(\Pro_K^H T^*),
& \Pro_K^H T &\simeq (\Ind_K^H {}^*T)^* \simeq {}^*(\Ind_K^H T^*).
\end{align*}
If $H$ is pivotal, then these formulae are simpler because the left and right duals coincide.

\subsubsection{}\label{subsub:pro-ind-other} Let $\g$ be a simple Lie algebra, $\mathfrak b$ a Borel subalgebra and $q$ a root of unity of odd order, relatively prime to 3 when $\g$ is of type $G_2$. Let  $H = \Ulus_q(\mathfrak g)$ be the Lusztig's $q$-divided power quantized enveloping algebra  and $K = \Ulus_q(\mathfrak b)$. Let $\C$ be the category of \fd{} $H$-modules of type 1, see \cite{APW}, and $\C_{\mathfrak b}$ the analogous category of $K$-modules. Then there are induced and produced functors
$\Ind_K^H, \Pro_K^H: \C_{\mathfrak b} \to \C$.
Then the Weyl and dual Weyl modules are the  produced and induced modules of the simple objects in $\C_{\mathfrak b}$,  parameterized conveniently by highest weights. This allows to define Weyl and dual Weyl filtrations and tilting modules; instead of appealing to Theorem \ref{thm:tilting-ringel}, one establishes the semisimplicity of the category of tilting modules by establishing crucial cohomological results, see \cite{AP} for details.

\subsection{\Fd{} Nichols algebras of diagonal type}\label{subsec:qt- pointed} We continue the analysis started in Subsection \ref{subsec:nichols}.

\subsubsection{} Let $\theta\in \N$ and $\I = \{1, \dots, \theta\}$.
Let $\La$ be a free abelian group with basis $\alpha_1, \dots, \alpha_{\theta}$.
Let $\leq$ be the partial order in $\La$ defined by
\begin{align}\label{eq:partial-order}
\lambda &\leq \mu  \iff \mu - \lambda \in  \La^+ := \sum_{i\in \I} \N_0\alpha_i.
\end{align}

Let us fix a $\Z$-linear injective map $E: \La \to \R$ such that $E(\alpha_i) > 0$ for all $i\in \I$. This induces a total order on $\La$ by $\lambda \preceq \mu \iff E(\lambda) \leq E(\mu)$; clearly, $\lambda \leq \mu$ implies $\lambda \preceq \mu$.

Given a $\La$-graded vector space $M = \oplus_{\lambda \in \La} M_{\lambda}$, the $\lambda$'s such that $M_{\lambda}\neq 0$ are called the weights of $M$; the set of all its weights is denoted $\varPi(M)$.

\subsubsection{} Let $(q_{ij})_{i,j\in \I}$ be a symmetric matrix with entries in
$\ku^{\times}$. Let $(V,c)$ be a braided vector space of diagonal type with matrix $(q_{ij})_{i,j\in \I}$, with respect to a basis $(v_i)_{i\in \I}$. The Nichols algebra $\toba(V)$ has a $\La$-grading determined by $\deg v_i = \alpha_i$, $i\in \I$.
By \cite{Kh}, there exists an ordered set $\bar S$ of homogeneous elements of $T(V)$ and a function $h: \bar S\to \N\cup\{\infty\}$
such that:
\begin{itemize}
	\item The elements of $\bar S$ are hyperletters in $(v_i)_{i\in \I}$.
	\item The projection $T(V) \to \toba(V)$ induces a bijection of $\bar S$ with its image $S$. Denote also by $h: S\to \N\cup\{\infty\}$ the induced function.
	\item The following elements form a basis of $\mathcal{B}(V)$:
\begin{align}\label{eq:pbw}
&\big\{s_1^{e_1}\dots s_t^{e_t}: t \in \N_0, \quad s_1>\dots >s_t,\quad s_i \in S, \quad 0 < e_i<h(s_i)\big\}.
\end{align}
\end{itemize}

When $S$ is finite, two distinct elements in $S$ have different degree, and we can
label the elements in $S$ with a finite subset $\varDelta_+^V$ of $\Lambda_+$;
this is instrumental to define  the root system $\mathcal{R}$ of $(V, c)$ \cite{HS}.

\smallbreak Let $W$ be another braided vector space of diagonal type with matrix $(q_{ij}^{-1})_{i,j\in \I}$, with respect to a
basis $w_1,\dots, w_{\theta}$; we shall consider the $\La$-grading on the Nichols algebra $\toba(W)$ determined by
$\deg w_i = -\alpha_i$, $i\in \I$.

\smallbreak \emph{We assume from now on that  $\dim \toba(V) < \infty$}; hence $\dim \toba(W) < \infty$. Under this assumption,
the connected components of $(q_{ij})$ belong to the list given
in \cite{He-adv}. An easy consequence is that $q_{ii} \neq 1$ and $q_{ij}$ is a root of 1 for all $i,j\in \I$;
this last claim follows because the matrix $(q_{ij})$ is assumed to be symmetric.
Also, $S$ is finite and $h$ takes values in $\N$. Thus \eqref{eq:pbw} says that
\begin{align*}
\varPi(\toba(V)) = \big\{\sum_{s \in S} e_s \deg s, \quad 0 \leq e_s<h(s)\big\}.
\end{align*}
Note that $0 \le \alpha \le \varrho$ for all $\alpha \in \varPi(\toba(V))$, where
\begin{align}\label{eq:deg-top}
\varrho = \sum_{s \in S} (h(s) - 1) \deg s = \deg \toba^{\text{top}}(V) \in \varPi(\toba(V)).
\end{align}

\subsubsection{} A \emph{pre-Nichols algebra} of $V$ is any graded braided Hopf algebra $\mathfrak T$ intermediate between $\toba(V)$ and $T(V)$:  $T(V) \twoheadrightarrow \mathfrak T \twoheadrightarrow \toba(V)$ (Masuoka). The defining relations of the Nichols algebra $\toba(V) = T(V)/\J(V)$ are listed as (40), \dots, (68) in \cite[Theorem 3.1]{Ang3}.  Now we observe
that, since $\dim\toba(V) < \infty$,  the following integers exist:
\begin{align} \label{defn:mij}
-a_{ij}&:= \min \left\{ n \in \mathbb{N}_0: (n+1)_{q_{ii}} (1-q_{ii}^n q_{ij}^2 )=0 \right\}
\end{align}
for all $j \in \I - \{i\}$. Set also $a_{ii}=2$.
The \emph{distinguished pre-Nichols algebra} of $V$ is  $\wtoba(V) = T(V)/\ideal(V) = \oplus_{n\in \N_0} {\wtoba}\,^{n}(V)$, where $\ideal(V)$
is the ideal of $T(V)$ generated by
\begin{itemize}
  \item relations  (41), \dots, (68) in \cite[Theorem 3.1]{Ang3},
  \item the quantum Serre relations $(\ad_c x_i)^{1-a_{ij}}x_j$ for those vertices such that $q^{a_{ij}}_{ii} = q_{ij}q_{ji}$.
\end{itemize}

The ideal $\ideal(V)$ was introduced in \cite{Ang3}, see the paragraph after Theorem 3.1; $\ideal(V)$ is a braided bi-ideal of $T(V)$,
so that there is a projection  of braided Hopf algebras $\wtoba(V) \twoheadrightarrow \toba(V)$ \cite[Proposition 3.3]{Ang3}.

\begin{definition}
We say that $p\in\{1,\ldots,\theta\}$ is a \emph{Cartan vertex} if, for every $j \neq p$, $ q_{pp}^{a_{pj}} = q_{pj}q_{jp}$.
In such case,  $\ord q_{pp}\geq 1 - a_{pj}$ by hypothesis.
\end{definition}

Clearly the projection $T(V) \to \wtoba(V)$ induces a bijection of $\bar S$ with its image $\widehat S$.
Denote again by $h$ the induced function.
Let $\widehat h: \widehat S\to \N\cup\{\infty\}$ be the function given by
$$\widehat h (s) =\begin{cases} \infty, &\text{if $s$ is conjugated to a Cartan vertex} \\
h(s) , &\text{otherwise.}
\end{cases}$$
Then the following set is a basis of $\wtoba(V)$, see the end of the proof of \cite[Theorem 3.1]{Ang3}:
\begin{align}\label{eq:pbw2}
&\big\{s_1^{e_1}\dots s_t^{e_t}: t \in \N_0, \quad s_1>\dots >s_t,\quad s_i \in \widehat S, \quad 0 < e_i<\widehat h(s_i)\big\}.
\end{align}

\subsubsection{}
The \emph{Lusztig algebra} $\lus (V)$ of $V$ is the graded dual of $\wtoba(V)$, that is $\lus (V) = \oplus_{n\in \N_0} \lus^{n}(V)$, where $\lus^{n}(V) = {\wtoba}\,^{n}(V)^*$.
The Lusztig algebra $\lus (V)$ of $V$ is the analogue of the $q$-divided powers algebra introduced in  \cite{L-jams, L-geo-ded}.

\subsection{The small quantum groups} We consider \fd{} pointed Hopf algebras  attached to the matrix $(q_{ij})_{i,j\in \I}$,
analogues of the small quantum groups or Frobenius-Lusztig kernels.
We need the following additional data: A finite abelian group $\Ga$, provided with
elements $g_1, \dots, g_\theta \in \Ga$ and characters $\chi_1, \dots, \chi_\theta \in \Hom_{\Z}(\Ga, \ku^{\times})$ such that
\begin{align}\label{eq:cartan-datum-finite}
\chi_j(g_i) &= q_{ij}, &  &i,j\in \I.
\end{align}
We define a structure of Yetter-Drinfeld module over $\ku\Ga$ on $W\oplus V$ by
\begin{align}
v_i &\in V_{g_i}^{\chi_i}, &  w_i &\in W_{g_i}^{\chi_i^{-1}}, & i&\in \I.
\end{align}
Let $\U$ be the Hopf algebra $T(W \oplus V) \# \ku\Ga / \Ig$, where $\Ig$ is the ideal generated by $\J(V)$, $\J(W)$ and the relations
\begin{align}\label{reducedlinking-finite}
&v_iw_{j} - \chi_j^{-1}(g_i) w_{j}v_i - \delta_{ij}(g_i^2 - 1) & &i,j\in \I.
\end{align}
This is a  pointed quasi-triangular Hopf algebra with $\dim \U = \vert \Ga\vert\dim \toba(V)^2$. The freedom to choose the abelian group $\Ga$ allows more flexibility, but otherwise this is very close to the small quantum groups (with more general Nichols algebras). By choosing $\Ga$ appropriately,  $\U$ is a spherical Hopf algebra.
Let $\B$ (resp. $\U^-$) be the subalgebra of $\U$ generated by $v_1,\dots,v_{\theta}$ and $\Ga$ (resp. $w_1,\dots,w_{\theta}$). Consider the morphisms of algebras $\rho_V :\toba(V) \to \U$, $\rho_W : \toba(W) \to \U$ and $\rho_{\Ga} : \ku\Ga\to \U$, given by
$\rho_V(v_i) = v_i$, $\rho_W(w_i)= w_i$,
$\rho_{\Ga}(g_i) = g_i$,  $i\in \I$. Then

\begin{enumerate}\renewcommand{\theenumi}{\alph{enumi}}   \renewcommand{\labelenumi}{(\theenumi)}
\item $\rho_V$, $\rho_W$, $\rho_{\Ga}$ give rise to isomorphisms $\toba(W) \simeq \U^-$, $\toba(V) \# \ku\Ga \simeq \B$.

\smallbreak
\item The map $ \toba(V) \otimes \toba(W) \otimes \ku\Ga \to \U$, $v \otimes w \otimes g \mapsto \rho_V(v) \rho_W(w) \rho_{\Ga}(g)$
is a coalgebra isomorphism.

\smallbreak
\item The multiplication maps $\U^- \otimes \B\to \U$ , $\B\otimes \U^-  \to \U$ are linear isomorphisms.
\end{enumerate}
See \cite[Theorem 5.2]{Ma}, \cite[Corollary 3.8]{ARS}. Now suppose that one would like to define tilting modules over $\U$,
ignoring that this is not a quasi-hereditary algebra. Inducing from $\B$, we see that simple modules correspond to characters of $\Gamma$; but the set of simple modules could not be totally ordered and \eqref{eq:pro-unique} and \eqref{eq:ind-head-simple} do not necessarily hold. A first approach to remedy this that might come to the mind is to assume the following extra hypothesis:
\emph{There exists a $\Z$-bilinear form $\langle\, , \, \rangle: \Ga\times \La \to \ku^{\times}$ such that}
\begin{align}\label{eq:cartan-datum-weights}
\langle g_i, \alpha_j\rangle &= q_{ij}, &  &i,j\in \I.
\end{align}
Then we may consider a category that is an analogue of category of representations of the algebraic group $G_1T$ in positive characteristic, or else of its quantum analogue in the literature of quantum groups.
Let $\C_{\U}$ be the category  of \fd{} $\U$-modules $M$ with a $\La$-grading $M = \oplus_{\lambda \in \La} M_{\lambda}$, compatible with the action of $\U$ in the sense
\begin{align}\label{eq:weight-space}
M_{\lambda} &= \{m\in M: g\cdot m = \langle g, \lambda\rangle m, \quad g\in \Ga\}, & \lambda \in \La, \\ \label{eq:weight-space-vi}
v_i\cdot M_{\lambda} &= M_{\lambda + \alpha_i},\qquad w_i\cdot M_{\lambda} = M_{\lambda - \alpha_i},  & \lambda \in \La,\quad i\in \I.
\end{align}
Morphisms in $\C_{\U}$ preserve both the action of $\U$ and the grading by $\La$.
The category $\C_{\B}$ is defined analogously. Both categories $\C_{\U}$ and $\C_{\B}$ are spherical tensor categories (up to an appropriate choice of $\Ga$), with duals defined in the obvious way. There are functors $\Res_{\B}^{\U}$, $\Ind_{\B}^{\U}$ and $\Pro_{\B}^{\U}$ between the categories $\C_{\U}$ and $\C_{\B}$; indeed
\begin{align*}
\Ind_{\B}^{\U} T &= \U\ot_{\B} T \simeq \toba(W) \ot T, & \Pro_{\B}^{\U} T &= \Hom_{\B} (\U, T) \simeq \Hom(\toba(W), T),
\end{align*}
so that the grading in $\Ind_{\B}^{\U} T$, resp. $\Pro_{\B}^{\U} T$, arises from that of $\toba(W) \ot T$, resp.  $\Hom(\toba(W), T)$.

\smallbreak Given $\lambda \in \La$, we denote by $\ku_{\lambda}$ the vector space with generator $\uno_{\lambda}$, considered as object in $\C_{\B}$ by
\begin{align*}
\deg \uno_{\lambda} &= \lambda, & g \cdot \uno_{\lambda} &= \langle g, \lambda\rangle \uno_{\lambda}, & g&\in \Ga, & v \cdot \uno_{\lambda} &= 0,& v&\in V.
\end{align*}
Note that $\ku_{\lambda} \simeq \ku_{\mu}$ in $\lmf{\B}$ whenever $\lambda - \mu \in \Ga^{\perp}$, but they are not isomorphic as objects in $\C_{\B}$ unless $\lambda = \mu$. Clearly, $\Irr \C_{\B} = \{\ku_{\lambda}: \lambda \in \La\}$.

\smallbreak Consider the modules $\Pro_{\B}^{\U}(\ku_{\lambda})$ and $\Delta(\lambda) := \Ind_{\B}^{\U}(\ku_{\lambda})$.
We know
\begin{align}\label{eq:pesos-delta}
\varPi(\Delta(\lambda)) &= \{\lambda - \alpha: \alpha \in \varPi(\toba(V))\},
\\ \label{eq:pesos-nabla}
\varPi(\Pro_{\B}^{\U}(\ku_{\lambda})) &= \{\lambda + \alpha: \alpha \in \varPi(\toba(V))\}.
\end{align}
Thus,  $\Delta(\lambda)$ has a highest weight  $\lambda$ and a lowest weight  $\lambda - \varrho$, both of multiplicity 1; and
$\Pro_{\B}^{\U}(\ku_{\lambda})$ has a highest weight  $\lambda + \varrho$ and a lowest weight  $\lambda$, both of multiplicity 1.
For convenience, set
$\nabla(\lambda) = \Pro_{\B}^{\U}(\ku_{\lambda - \varrho}) \simeq \Delta(-\lambda + \varrho)^*$, since $\ku_{\lambda}^* \simeq \ku_{-\lambda}$.

\smallbreak
The statements \eqref{subsub:ind-univ}, \eqref{subsub:simple-ind-quot}, \eqref{subsub:pro-univ} and \eqref{subsub:simple-pro-subobj} in \ref{subsub:pro-ind-generalities} above carry over to the present setting.
We claim that \eqref{eq:ind-unique}, \eqref{eq:pro-unique}, \eqref{eq:ind-head-simple} and \eqref{eq:pro-socle-simple} also hold here.

\smallbreak
Indeed, let $S\in \Irr \C_\U$ and $\lambda, \mu \in \varPi(S)$ such that $\mu \preceq \tau \preceq \lambda$ for all $\tau\in \varPi(S)$.
If $m \in S_{\lambda} - 0$, then $v_i\cdot m =0$ for all $i\in \I$ by \eqref{eq:weight-space-vi}, hence $\ku m \simeq \ku_{\lambda}$
and we have $\Delta(\lambda) \twoheadrightarrow S$ and $\varPi(S) \subseteq \{\lambda - \alpha: \alpha \in \varPi(\toba(V))\}$
by \eqref{eq:pesos-delta}. Moreover, if $\Delta(\lambda') \twoheadrightarrow S$, then
$\lambda \in \varPi(S) \subseteq \{\lambda' - \alpha: \alpha \in \varPi(\toba(V))\}$, hence $\lambda' = \lambda$,
showing \eqref{eq:ind-unique}. The proof of \eqref{eq:ind-head-simple} is standard: $\Delta(\lambda)$ has a unique maximal submodule,
which is the sum of all submodules intersecting trivially $\Delta(\lambda)_{\lambda}$. Now \eqref{eq:pro-unique}
and \eqref{eq:pro-socle-simple} follow by duality, so that $S \hookrightarrow \nabla(\mu)$.

In conclusion we have the following standard result.

\begin{prop}\label{prop:simple-modules} If $E(\lambda) := $ head of $\Delta (\lambda)$, then $\Irr \C_{\U} = \{E(\lambda): \lambda \in \La\}$. If $\mu\in \Ga^{\perp}$, then $\dim E(\mu)=1$ and $E(\mu) \ot E(\lambda) \simeq E(\lambda) \ot E(\mu)\simeq E(\lambda + \mu)$. There is a bijection $w_0: \La \to \La$ such that $E(\lambda) := $ socle of $\nabla (w_0(\lambda))$. \qed
\end{prop}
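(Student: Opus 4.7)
The plan is to specialize the general framework of Subsection \ref{subsec:ind-prod} to the Hopf subalgebra inclusion $\B \subset \U$, and then exploit the $\La$-grading of $\C_\U$. The paragraph preceding the statement already verifies \eqref{eq:ind-unique}--\eqref{eq:pro-socle-simple} in $\C_\U$: any proper quotient of $\Delta(\lambda)$ retains its one-dimensional weight-$\lambda$ line, so $\Delta(\lambda)$ has simple head $E(\lambda)$; conversely, every $S\in\Irr\C_\U$ possesses a $\preceq$-maximal weight $\lambda$, necessarily annihilated by each $v_i$ thanks to \eqref{eq:weight-space-vi}, yielding $\Delta(\lambda)\twoheadrightarrow S$. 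Combining these facts I would conclude $\Irr\C_\U=\{E(\lambda):\lambda\in\La\}$. Injectivity of $\lambda\mapsto E(\lambda)$ holds in the \emph{graded} category because $\lambda$ is recoverable from $E(\lambda)$ as its unique $\preceq$-maximal weight, with multiplicity one.

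For the bijection $w_0$, I would invoke the general formalism recorded in \ref{subsec:ind-prod} with the duality $\D=(-)^*$: for every $\ku_\lambda\in\Irr\C_\B$ there is a unique $\ku_\mu\in\Irr\C_\B$ for which $E(\lambda)$ embeds as the socle of $\Pro_\B^\U\ku_\mu$. Rewriting $\Pro_\B^\U\ku_\mu=\nabla(\mu+\varrho)$ and setting $w_0(\lambda):=\mu+\varrho$ produces the asserted map on $\La$; its bijectivity is formal, as the roles of $\Ind$ and $\Pro$ are symmetric under $\D$.

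For the statement about $\mu\in\Ga^\perp$, the key step is to exhibit directly a one-dimensional object $L_\mu$ of $\C_\U$ concentrated in degree $\mu$, on which $v_i$ and $w_i$ act as zero and $\Ga$ acts via $\langle-,\mu\rangle$. All the quantum Serre-type relations generating $\J(V)$ and $\J(W)$ are trivially satisfied on $L_\mu$; the only nontrivial check is \eqref{reducedlinking-finite}, which reduces to $g_i^2-1$ acting as zero, and indeed $\langle g_i^2,\mu\rangle=1$ because $\mu\in\Ga^\perp$. By adjunction, the identity $\ku_\mu\to L_\mu|_\B$ lifts to a surjection $\Delta(\mu)\twoheadrightarrow L_\mu$, so $L_\mu\simeq E(\mu)$ and $\dim E(\mu)=1$. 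The tensor-product claim then follows from the skew-primitive coproducts $\Delta_\U(v_i)=v_i\ot 1+g_i\ot v_i$ and $\Delta_\U(w_i)=w_i\ot 1+g_i\ot w_i$: since $g_i$ acts on $E(\mu)$ as the scalar $1$, the elements $v_i$ and $w_i$ act on $E(\mu)\ot E(\lambda)$ exactly as they do on the second tensorand. Hence $E(\mu)\ot E(\lambda)$ is simple with $\preceq$-maximal weight $\lambda+\mu$, so $\simeq E(\lambda+\mu)$; the argument is symmetric in the tensor factors, giving $E(\lambda)\ot E(\mu)\simeq E(\lambda+\mu)$ as well.

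The main obstacle I foresee is conceptual rather than computational: one must check that the adjunction and duality machinery of Subsection \ref{subsec:ind-prod} transfers faithfully from $\lmf\U$ to the $\La$-graded category $\C_\U$, so that heads, socles, and Hom-spaces computed in the two settings agree up to the grading shifts intrinsic to $\Pro$ and $\Ind$. Once this is settled, the three parts reduce to standard highest-weight bookkeeping.
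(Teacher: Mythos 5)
Your proof is correct and follows essentially the same approach as the paper: the proposition carries a \qed because the highest-weight framework established in the paragraph immediately preceding it, together with the general $\Ind$/$\Pro$ formalism of Subsection \ref{subsec:ind-prod}, is taken to suffice. You merely fill in the routine details that the paper leaves to the reader (the explicit one-dimensional module $L_\mu$ for $\mu\in\Ga^\perp$, the skew-primitive coproduct computation for the tensor formula, and the formal bijectivity of $w_0$), and your closing caveat about transferring the adjunction/duality machinery to the $\La$-graded category is exactly the point the paper itself dispatches with the phrase ``carry over to the present setting.''
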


The modules $\Delta(\lambda)$, resp. $\nabla(\lambda)$, are called the Weyl modules, resp. the dual Weyl modules. We may then go on and define good and Weyl
filtrations, and tilting modules.
However, it is likely that tilting modules are projective, thus with 0 quantum dimension, as is the case for $G_1T$, see \cite[3.4]{A-quot}, \cite{Jantzen}.

\subsection{Generalized quantum groups}
The next idea is to replace $\Ga$ by an infinite abelian group $Q$, perhaps free of finite rank, and the
Nichols algebras $\toba(V)$, $\toba(W)$ by the distinguished pre-Nichols algebras $\wtoba(V)$, $\wtoba(W)$.
Namely, we assume that $Q$ is provided with
elements $K_1, \dots, K_\theta$ and characters $\Upsilon_1, \dots, \Upsilon_\theta \in \Hom_{\Z}(Q, \ku^{\times})$ such that
$\Upsilon_j(K_i) = q_{ij}$, $i,j\in \I$.
Then $W\oplus V$ is also a Yetter-Drinfeld module over $\ku Q$ by
$v_i \in V_{K_i}^{\Upsilon_i}$, $w_i \in W_{K_i}^{\Upsilon_i^{-1}}$, $i\in \I$.
Let $\Udcp(V) = T(W \oplus V) \# \ku Q / \widehat{\Ig}$ where $\widehat{\Ig}$ is the ideal generated by $\ideal(V)$, $\ideal(W)$ and the relations
\begin{align}\label{reducedlinking}
&v_iw_{j} - \chi_j^{-1}(g_i) w_{j}v_i - \delta_{ij}(g_i^2 - 1) & &i,j\in \I.
\end{align}
This Hopf algebra, for a suitable $Q$, was introduced in \cite{Ang3}; it
 is the analogue of the quantized enveloping algebra at a root of one for $(q_{ij})_{i,j\in \I}$ in the version of \cite{dCP}.
It also has a triangular decomposition similar as in the case of $\U$. Furthermore, there are
so-called Lusztig isomorphisms, because they generalize the braid group representations defined by Lusztig, see e.~g. \cite{L}.
Actually, the definition of the ideal $\widehat{\Ig}$ in \cite{Ang3} was designed to have (a) a braided bi-ideal, and (b) the Lusztig automorphisms at the level of $\Udcp(V)$, generalizing results from \cite{H-isom}. More precisely, the situation is as follows.

\smallbreak We assume that $Q$ and $\Ga$ are accurately chosen and that there is a group epimorphism $Q\to \Ga$.
Given $i\in \I$, we define the $i$-th reflection of $(V,c)$.  Define $s_i\in\Aut \La$ by $s_i(\alpha_j)=\alpha_j-a_{ij}\alpha_i$, see \eqref{defn:mij}.
Then $s_i(V, c) = (V, c')$, where
$C'$ is the braiding of diagonal type with matrix $(\widetilde{q}_{rs})_{r,s\in \I}$. Here $\widetilde{q}_{rs}= (s_i(\alpha_r)\vert s_i(\alpha_s))$; we omit the mention to the braidings $c$, $c'$, etc.
Then

\begin{itemize}\renewcommand{\labelitemi}{$\circ$}
  \item There are algebra isomorphisms $T_i, T_i^-: \U(V) \to \U(s_iV)$,
such that $T_i T_i^-=T_i^- T_i = \id_{\U(V)}$ \cite[Theorem 6.12]{H-isom}.

  \item There are algebra isomorphisms $T_i, T_i^-: \Udcp(V) \to \Udcp(s_iV)$,
such that $T_i T_i^-=T_i^- T_i = \id_{\Udcp(V)}$ \cite[Proposition 3.26]{Ang3}, compatible with those of $\U(V)$.
\end{itemize}

There is a $\La$-grading on $T(W \oplus V)\# \ku Q$ given by $\deg \gamma = 0$, $\gamma\in Q$,
$\deg v_i = \alpha_i = -\deg w_i$, $i\in \I$; it extends to gradings of $\U(V)$ and $\Udcp(V)$.
Hence we may consider categories $\C_{\Udcp}$ and so on.
However, the Hopf algebra $\Udcp(V)$ has a large center $Z$ and is actually finite over it. Thus, it seems that
its representation theory should be addressed with the methods of \cite{dCP, dCPRR}.

It remains a third tentative: to repeat the above considerations replacing the distinguished
pre-Nichols algebras $\wtoba(V)$, $\wtoba(W)$ by the Lusztig algebras $\lus(V)$, $\lus(W)$.
We hope to address this in future publications.

\bigbreak
\subsection*{Acknowledgements} We thank Olivier Mathieu for a course on tilting modules given at the University of
C\'ordoba in November 2009; and also Alain Brugui\`eres, Flavio Coelho, Juan Cuadra,  Michael M\"uger, Dmitri Nikschych, Claus Ringel and Yorck Sommerh\"auser
for very useful conversations.

\end{document}